\newcommand{\id}{\mathrm{id}}
\renewcommand{\d}{\partial}
\newcommand{\R}{\mathbb{R}}
\newcommand{\til}[1]{\tilde{#1}}
\renewcommand{\div}{\mathrm{div}}
\newcommand{\Ric}{\mathrm{Ric}}
\newcommand{\scal}{\mathrm{scal}}
\newcommand{\PIC}{\mathrm{PIC}}
\newcommand\incircbin
\newcommand\@incircbin[2]
\newcommand{\owedge}{\incircbin{\land}}
\newtheorem{theorem}{Theorem}[section]
\newtheorem{corollary}[theorem]{Corollary}
\newtheorem{lemma}[theorem]{Lemma}
\theoremstyle{definition}
\numberwithin{equation}{section}
\titleformat{\section}[block]{\scshape\filcenter}{\thesection.}{3pt}{}
\titleformat{\subsection}[block]{\scshape}{\thesubsection.}{3pt}{}
\title{\textbf{\large{SHRINKING RICCI SOLITONS WITH POSITIVE ISOTROPIC CURVATURE}}}
\author{\textsc{\small KEATON NAFF}}
\date{} %23 May 2019
\begin{document}
\maketitle
%\allowdisplaybreaks

\begin{abstract}
We show that in dimensions $n \geq 12$, a non-flat complete gradient shrinking solitons with uniformly positive isotropic curvature (PIC) must be a quotient of either the round sphere $S^n$ or the cylinder $S^{n-1} \times \R$. We also observe that in dimensions $n \geq 5$, a complete gradient shrinking soliton that is strictly PIC and weakly PIC2 must be a quotient of either the round sphere $S^n$ or the cylinder $S^{n-1} \times \R$.
\end{abstract}

\section{Introduction}
 
A Ricci gradient shrinking soliton is a triple $(M, g, f)$ consisting of a complete Riemannian manifold $(M, g)$ a function $f \in C^{\infty}(M)$, called the Ricci potential, satisfying the equation
\begin{equation*}
\Ric + \nabla^2 f = \frac{1}{2} g.
\end{equation*}
We say a gradient shrinking soliton is \textit{complete} if the vector field $\nabla f$ is a complete vector field. In this note, we will always assume the gradient shrinking soliton is complete.

Ricci shrinking solitons have played an important role in the analysis of singularities in the Ricci flow. By work of Perelman, under certain preserved curvature positivity conditions, the blow-up limits at a singular time of the Ricci flow have an asymptotic soliton at $t = - \infty$. The classification of these asymptotic solitons can be used to understand and eventually classify the blow-up models. 

In the Ricci flow, the curvature positivity of the blow-up is usually better than the original manifold and in classification results one typically supposes a curvature positivity that is known to hold for blow-ups and blow-downs. In three dimensions, there is of course the celebrated Hamilton-Ivey result and Perelman's original classification of noncollapsed gradient shrinking solitons with bounded nonnegative sectional curvature. Naber later proved a universal noncollapsing result for gradient shrinking solitons in all dimensions in \cite{Naber}.

In a seminal paper, Hamilton \cite{Ham97} initiated and developed the study of the Ricci flow for initial data with positive isotropic curvature (PIC for short) in four dimensions. The PIC condition was introduced by Micallef and Moore \cite{MM}. Hamilton showed that this condition is preserved by the Ricci flow (in four dimensions), constructed a continuous family of preserved curvature cones emanating from the PIC cone, and used this family to get pinching estimates for the understanding and classification of singularity models. Using this family of cones, Li, Ni, Wallach, and Wang recently completed the classification of shrinking solitons which are strictly PIC; see \cite{NiWallach}, \cite{NiWallach2}, \cite{Ni}.

In higher dimensions, the Ricci flow has been studied under the assumption of nonnegative curvature operator and recently, for $n \geq 12$, for PIC initial data by Brendle \cite{BrePIC}. For the classification of gradient shrinking solitons in higher dimensions, Munteanu and Wang \cite{MW} gave an elegant proof using the maximum principle that gradient shrinking solitons with nonnegative sectional curvature and positive Ricci curvature must be compact. Combining this result with the convergence result of Bohm and Wilking \cite{BW2} classifies, in all dimensions, gradient shrinking solitons with nonnegative curvature operator. 

A theory for PIC initial data is a major improvement over initial data with nonnegative curvature operator. The preservation of the PIC condition for $n \geq 5$ was first shown by Brendle and Schoen \cite{BS} and independently by Nguyen \cite{Ngu}. In the setting of \cite{BrePIC}, blow-up limits are uniformly PIC and weakly PIC2 (see section 3 for definition). So in this short note, our first observation is that the theorem of Munteanu and Wang can be extended from initial data with nonnegative curvature operator to initial data in the PIC2 cone. By combining their result with the convergence result of Brendle and Schoen \cite{BS} (later improved in \cite{BrendleConv}), we can replace the assumption that the curvature operator is nonnegative with the assumption that it be weakly PIC2. 

\begin{theorem} \label{thm1}
Suppose $n \geq 5$ and $(M, g, f)$ is an $n$-dimensional complete gradient shrinking soliton with curvature tensor that is strictly PIC and weakly PIC2. Then $(M, g)$ is isometric to a quotient of either a round sphere $S^n$ or a cylinder $S^{n-1} \times \R$.
\end{theorem}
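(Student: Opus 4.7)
The plan is to extend the compactness theorem of Munteanu-Wang \cite{MW} to the weakly PIC2 setting, handle the potentially non-compact case by a splitting argument, and then invoke the convergence theorem of Brendle-Schoen to identify the resulting compact model.

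The key observation is that the weakly PIC2 hypothesis already implies nonnegative sectional curvature: setting the scaling parameters to zero in the definition of PIC2 (equivalently, using that $R \oplus 0$ on $M \times \R^2$ is weakly PIC) yields $R(e_1,e_3,e_1,e_3) \geq 0$ for every orthonormal pair. Consequently, on any complete gradient shrinking soliton that is weakly PIC2 with $\Ric > 0$, the maximum principle argument of Munteanu-Wang applies verbatim and yields compactness; this is the sole dimension-free extension needed.

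I would then split into two cases. If $\Ric > 0$ everywhere, the extension just described shows $M$ is compact; combined with strict PIC and weak PIC2, this places $(M,g)$ in the setting of the convergence theorem of Brendle-Schoen, as improved in \cite{BrendleConv}, which says that the normalized Ricci flow starting at such a metric converges to a metric of constant positive curvature. Since a gradient shrinking soliton evolves by diffeomorphism and scaling under Ricci flow, its normalized trajectory is stationary, forcing $(M,g)$ to already have constant positive sectional curvature, hence to be a quotient of $S^n$. If instead $\Ric$ has a zero direction at some point, I would apply the strong maximum principle to the evolution of $\Ric$ (or of the full curvature operator on the boundary of the PIC2 cone) along the associated eternal self-similar Ricci flow, using the preservation of weakly PIC2 from \cite{BS}. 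This produces a parallel distribution and hence an isometric splitting $M = N^{n-k} \times \R^k$ with $k \geq 1$. Strict PIC forces $k = 1$: if $k \geq 2$, one can pick $e_3, e_4$ tangent to the $\R^k$ factor, which makes every term in the PIC inequality vanish. Applying strict PIC on $M = N \times \R$ to 4-frames with $e_4$ along $\R$ yields the inequality $R_N(e_1,e_3,e_1,e_3) + R_N(e_2,e_3,e_2,e_3) > 0$ for all orthonormal triples in $TN$; since $N$ already has nonnegative sectional curvature (inherited from weak PIC2), this strict inequality forces $\Ric_N > 0$. Now $N$ is a complete gradient shrinking soliton of dimension $n-1 \geq 4$, weakly PIC2, with $\Ric_N > 0$, so the extended Munteanu-Wang step gives $N$ compact, and the convergence theorem identifies $N$ as a quotient of $S^{n-1}$.

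The principal obstacle I anticipate is the splitting step: one must verify that the strong maximum principle, applied at a degenerate direction of the Ricci or curvature operator on the boundary of the PIC2 cone, produces a global smooth parallel decomposition compatible with the soliton structure, and then check that the residual strict PIC on the factor $N$ is strong enough to yield $\Ric_N > 0$ so that the Munteanu-Wang step triggers on $N$. The remaining ingredients are standard preservation, compactness, and convergence results that need only be assembled in the right order.
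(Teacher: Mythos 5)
Your overall strategy — reduce to the compact, positive‑Ricci case via the Munteanu–Wang maximum principle and a splitting argument, then finish with a Ricci flow convergence theorem — is exactly the approach the paper takes, citing Proposition~6.5 of \cite{BrePIC} for the dichotomy (split off a line or $\Ric>0$). Your observation that weak PIC2 gives nonnegative sectional curvature, so Munteanu–Wang applies, is precisely the paper's one‑line extension of their theorem.

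There is however a genuine gap in the final step. You claim that compactness together with strict PIC and weak PIC2 ``places $(M,g)$ in the setting of the convergence theorem of Brendle–Schoen, as improved in \cite{BrendleConv}.'' But the hypothesis of the convergence theorem in \cite{BrendleConv} is that $M\times\R$ has positive isotropic curvature, i.e.\ that $R$ is \emph{strictly} PIC1 (the earlier \cite{BS} needs strictly PIC2). Strict PIC combined with weak PIC2 does \emph{not} imply strict PIC1 at a given point: an algebraic curvature tensor of the form $R_{S^{n-1}}\oplus 0$ (the cylinder curvature) is strictly PIC and weakly PIC2 for $n\ge 5$, but the PIC1 expression with $\lambda = 0$ and $e_3$ along the flat direction degenerates to zero. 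Adding $\Ric>0$ at a point does not close this gap either, since $R_{1313}=R_{2323}=0$ for a single pair of planes is consistent with $\Ric(e_3,e_3)>0$. So before invoking the convergence theorem you must upgrade to strict PIC1 (or strict PIC2); the paper does this with a second strong maximum principle argument on the associated self‑similar Ricci flow, namely Proposition~6.6 of \cite{BrePIC}, which shows that a compact, simply connected, weakly PIC2 ancient solution which is strictly PIC and does not split off a line must in fact be strictly PIC2. You only invoke the strong maximum principle in the degenerate‑Ricci case to produce the splitting; the same kind of argument is also needed in the $\Ric>0$ case to rule out a boundary‑PIC2 degeneracy that does not come from a local product. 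The same remark applies to the factor $N$ after splitting: you would also need $N$ strictly PIC1 (not merely $\Ric_N>0$, weakly PIC2, strictly PIC) before the convergence theorem applies there.
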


An immediate corollary of the theorem above, is a classification of the asymptotic soliton of ancient $\kappa$-solutions. Following \cite{BrePIC}, we define an ancient $\kappa$-solution to be a non-flat, ancient solution to the Ricci flow of dimension $n$ which is complete, has bounded curvature, is weakly $\PIC2$, and is $\kappa$-noncollapsed on all scales. Given such an object, $(M, g(t))$, $t \in (- \infty, 0]$, for any fixed point $q \in M$ and any sequence of times $t_k \to -\infty$, there exist points $p_k \in M$ such that the reduced distance satisfies $\ell(p_k, t_k) \leq n$. By work of Perelman \cite{Per1}, the sequence of rescalings by $|t_k|^{-1}$ about the points $(p_k, t_k)$ converges to a non-flat gradient shrinking soliton. Any such limit is called the asymptotic soliton. 

\begin{corollary}
Let $n \geq 5$. Suppose $(M, g(t))$, $t \in (-\infty, 0]$ is an ancient $\kappa$-solution, which in addition is uniformly $\PIC$. Then its asymptotic soliton is either a quotient of a round sphere $S^n$ or a cylinder $S^{n-1} \times \R$.
\end{corollary}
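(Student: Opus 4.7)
The plan is to verify the hypotheses of Theorem \ref{thm1} for the asymptotic soliton and then invoke that theorem. As recalled in the excerpt, Perelman's construction produces, for any sequence $t_k \to -\infty$ and suitable $p_k \in M$, parabolic rescalings $g_k(t) := |t_k|^{-1} g(|t_k| t)$ pointed at $p_k$ which subconverge in the pointed Cheeger-Gromov sense to a complete, non-flat gradient shrinking soliton $(M_\infty, g_\infty, f_\infty)$; the $\kappa$-noncollapsing of the ancient solution supplies the injectivity-radius bound that prevents degeneration. The task then reduces to showing this limit soliton is strictly $\PIC$ and weakly $\PIC2$, so that Theorem \ref{thm1} applies.

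For weakly $\PIC2$, I would argue that this curvature condition is preserved by the Ricci flow for $n \geq 5$ by Brendle-Schoen \cite{BS} (independently Nguyen \cite{Ngu}), is clearly invariant under constant rescaling of the metric, and is a closed condition under smooth convergence of curvature tensors; hence it passes from the ancient solution to each $g_k(t)$ and then to $(M_\infty, g_\infty)$. For strict $\PIC$, the uniformly $\PIC$ hypothesis gives a constant $\theta > 0$ with
\[
R_{1313} + R_{1414} + R_{2323} + R_{2424} - 2 R_{1234} \geq \theta \, \scal
\]
at every orthonormal $4$-frame and every spacetime point. Both sides scale the same way under $g \mapsto c g$, so each rescaled flow $g_k$ satisfies the same inequality with the same $\theta$, and the inequality survives smooth convergence to $(M_\infty, g_\infty)$. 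To upgrade this to strict $\PIC$ I need $\scal_\infty > 0$ everywhere. Since weakly $\PIC2$ forces nonnegative sectional and hence nonnegative scalar curvature, I invoke the standard dichotomy for shrinkers (the scalar curvature of a complete gradient shrinking soliton is either identically zero, in which case the soliton is the flat Gaussian shrinker, or strictly positive everywhere); non-flatness of the asymptotic soliton then rules out the first alternative, yielding $\scal_\infty > 0$ and strict $\PIC$.

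Both hypotheses of Theorem \ref{thm1} thus hold for $(M_\infty, g_\infty, f_\infty)$, which is accordingly identified as a quotient of $S^n$ or $S^{n-1} \times \R$. I foresee no serious obstacle: each step is either a scale-invariance, preservation by the Ricci flow, or a closedness property under smooth convergence of cones. The only mildly delicate point is the strict positivity of $\scal_\infty$, for which I rely on the shrinker dichotomy rather than any direct $\PIC2$ geometry.
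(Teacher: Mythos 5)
Your proof is correct and takes essentially the approach the paper intends: the corollary is asserted as an immediate consequence of Theorem \ref{thm1} together with Perelman's construction of the asymptotic soliton. The details you supply---scale invariance and closedness of the curvature conditions under pointed Cheeger--Gromov convergence, and the shrinker dichotomy giving $\scal_\infty > 0$ so that uniform $\PIC$ upgrades to strict $\PIC$---are precisely the steps needed to make the ``immediate'' claim rigorous.
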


This can be used to give an alternative proof of the universal non-collapsing of ancient $\kappa$-solutions as shown in Theorem 6.19 in \cite{BrePIC}. 

Very recently, Li and Ni gave a classification of weakly PIC1 Ricci shrinking solitons \cite{LiNi} (thereby showing PIC1 shrinkers are PIC2). Their results independently include Theorem 1.1, its corollary, and Lemma 7.2 below. Our techniques are based on those in \cite{BrePIC}.

In an effort to understand solitons only under the PIC assumption, our second theorem and the main effort of this note is a pinching condition for complete gradient shrinking solitons in all dimensions. This is similar to a result for ancient solutions proved in \cite{BHS}. For definitions of $\mathscr C_B(\R^n)$ and property $(\ast)$, see section 3. 

\begin{theorem}\label{cones}
Let $(M, g, f)$ be an $n$-dimensional complete gradient shrinking Ricci soliton and $C(s)_{s \in [0, 1]}$ a continuously varying family of cones in $\mathscr{C}_B(\R^n)$ each satisfying property $(\ast)$. Suppose that the curvature tensor of $(M, g)$ is contained in the cone $C(0)$ at every point in $M$. Then the curvature tensor of $(M, g)$ is contained in the cone $C(1)$ at every point in $M$. 
\end{theorem}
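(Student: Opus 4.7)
The strategy is a continuity argument in the parameter $s$, combined with the self-similar Ricci flow structure of the shrinker and a maximum principle for the drift Laplacian. First, I would realize $(M,g,f)$ as a self-similar Ricci flow $g(t) := (1-t)\phi_t^* g$ for $t \in (-\infty, 1)$, where $\phi_t$ is the flow of $(1-t)^{-1}\nabla f$. Because each $C(s) \subset \mathscr{C}_B(\R^n)$ is scale-invariant (as a cone) and $O(n)$-invariant, the pointwise condition ``$R(g(t)) \in C(s)$ at every point of $M$'' is independent of $t$. In particular, the statement of the theorem is equivalent to the assertion that this condition is invariant under enlargement of $s$.

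Next, I would set
\[
S := \{\, s \in [0,1] : R(g)(p) \in C(s) \text{ for every } p \in M \,\}.
\]
By hypothesis $0 \in S$, and closedness of $S$ in $[0,1]$ follows from the continuity of $s \mapsto C(s)$ and the closedness of each cone. The content of the argument is the openness of $S$.

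To establish openness, fix $s_0 \in S$. The role of property $(\ast)$, as formulated in \cite{BHS} and \cite{BrePIC}, is the PDE--ODE compatibility condition ensuring that the Hamilton quadratic $Q(R) = R^2 + R^\#$ points strictly into $C(s_0 + \eta)$ at every boundary point of $C(s_0)$, for all sufficiently small $\eta > 0$. I would convert this into a pointwise conclusion on the soliton using the distance-to-cone technique of \cite{BrePIC}: pick a smooth, convex, $O(n)$-invariant functional $\rho$ on $\mathscr{C}_B(\R^n)$ vanishing exactly on $C(s_0 + \eta)$, and consider $\rho \circ R$ on $M$. Differentiating the soliton identity gives, up to conventions, an equation of the schematic form
\[
\Delta_f R + 2 Q(R) = R, \qquad \Delta_f := \Delta - \nabla_{\nabla f}.
\]
Combined with the convexity of $\rho$ and the strict inward-pointing property of $Q$, this yields a differential inequality $\Delta_f(\rho \circ R) \leq C\,\rho \circ R$ (at points where $\rho \circ R$ is positive), whence the drift maximum principle forces $\rho \circ R \equiv 0$, i.e.\ $s_0 + \eta \in S$.

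The main obstacle is that $M$ is not assumed compact and the curvature need not be globally bounded, so the standard compact maximum principle does not apply off the shelf. I would handle this exactly as in the proof of Theorem \ref{thm1}: exploit the shrinker structure to work with the weighted measure $e^{-f}\dvol$ (which is finite on a complete shrinker thanks to the quadratic growth of $f$), apply the robust weak maximum principle for $\Delta_f$ in this weighted setting, and, if needed, reduce to a maximizing sequence via an Omori--Yau argument as in \cite{MW}. The delicate technical point is the construction of the distance functional $\rho$: it must be simultaneously $C^2$, convex, $O(n)$-invariant, and compatible with $Q(R)$ on $\partial C(s_0)$, which is precisely the pinching-set machinery developed in \cite{BrePIC} and \cite{BHS}. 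Once this functional is in place, the argument above closes and $S = [0,1]$ follows.
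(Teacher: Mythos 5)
Your overall architecture is in the right spirit: a continuity argument in $s$ driven by the drift Laplacian and property $(\ast)$. But there is a concrete gap, and you have also slightly misidentified where the difficulty lies.

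First, the conclusion you want out of the maximum principle is not simply $\rho\circ R\equiv 0$, i.e.\ $R\in C(s_0+\eta)$, for then the open--closed argument would close. What the paper actually proves is the stronger \emph{uniform pinching} $R-\delta\,\scal\, I\in C(s_{\max})$ for some $\delta>0$ independent of the point; only then does property $(\ast)(\mathrm{iv})$ (that $I$ is interior) combined with the continuity of $s\mapsto C(s)$ yield $R\in C(s_{\max}+s')$ for small $s'$. Without producing a definite buffer of $\delta\,\scal\, I$, the containment $R\in C(s_0)$ plus inward-pointing of $Q(R)$ does not by itself push the curvature into a strictly smaller cone, and your proposed inequality $\Delta_f(\rho\circ R)\le C\,\rho\circ R$ will not force $\rho\circ R\equiv 0$ unless you already know $\rho\circ R$ is somewhere zero or control it at infinity.

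Second, and this is the real technical content, you never localize. On a noncompact shrinker the curvature (hence $\rho\circ R$) need not be bounded, and the weighted measure $e^{-f}\dvol$ being finite is not enough: a weak maximum principle for $\Delta_f$ still needs growth control on the function, and Omori--Yau needs a bound below (and is usually run for functions already known to be bounded). The paper supplies exactly this missing ingredient: it takes $\varphi=(f+n)^2$, checks via the soliton identities that $\Delta_f\varphi\le\varphi$ with $\scal/\varphi\to 0$ at infinity, and works with the section $S=R+(\varepsilon\varphi-\delta\,\scal)I+\lambda I$. The $\varepsilon\varphi$ term \emph{overwhelms} the curvature at infinity (since $\scal\le f$), forcing the ``distance-to-cone in the $I$-direction'' function $u$ to vanish outside a compact set. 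Only then is the interior maximum principle applicable, and the contradiction is obtained by computing $-\Delta_f S(x_0)$ directly and invoking Lemma~\ref{cone_prop} to dominate the error terms by $\theta\,\scal(S)^2 I$. No $C^2$ convex support functional $\rho$ is needed; one uses instead the simpler Lemma~\ref{bohmwilk}, which works directly with supporting half-spaces of the cone. Without the barrier $\varphi$ and the $\varepsilon\to 0$ limit, your plan does not close.

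A small point: the paper's elliptic identity is $\Delta_f R=R-Q(R)$, not $\Delta_f R+2Q(R)=R$; this does not affect the strategy but should be corrected before attempting the computation.
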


Finally, in \cite{BrePIC} Brendle constructs a new continuous family of preserved cones that pinch towards $\PIC1$ for $n \geq 12$. Using this family of preserved cones constructed, we can improve Theorem \ref{thm1} by dropping the assumption of weakly PIC2 in dimensions $n \geq 12$. 

\begin{theorem}
Suppose $n \geq 12$ and $(M, g, f)$ is a non-flat $n$-dimensional complete gradient shrinking soliton with curvature tensor that is uniformly PIC. Then $(M, g)$ is isometric to quotient of either a round sphere $S^n$ or a cylinder $S^{n-1} \times \R$. 
\end{theorem}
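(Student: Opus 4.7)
The plan is to combine Theorem~\ref{cones} with the continuously varying family of preserved cones constructed by Brendle in \cite{BrePIC} to upgrade the uniformly PIC hypothesis to the combined strictly PIC plus weakly $\PIC2$ hypothesis, and then invoke Theorem~\ref{thm1} to finish.

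Brendle's family $\{C(s)\}_{s \in [0,1]} \subset \mathscr{C}_B(\R^n)$, which is only available for $n \ge 12$, has the following key features needed for the argument: each $C(s)$ satisfies property $(\ast)$ from Section~3; the initial cone $C(0)$ contains every uniformly PIC curvature tensor (after a suitable scaling); and the terminal cone $C(1)$ lies inside the intersection of the strictly PIC and weakly $\PIC2$ cones, the family pinching the curvature close to that of a round cylinder.

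Given this, the argument is a direct composition. First, since $(M, g)$ is uniformly PIC, after normalizing I may assume the curvature tensor lies inside $C(0)$ at every point of $M$. Second, I apply Theorem~\ref{cones} to the family $\{C(s)\}_{s \in [0,1]}$: since $(M,g,f)$ is a complete shrinker whose curvature starts in $C(0)$, its curvature in fact lies in $C(1)$ at every point. Third, since $C(1)$ is contained in both the strictly PIC and weakly $\PIC2$ cones, $(M,g,f)$ satisfies the hypotheses of Theorem~\ref{thm1}, which yields that $(M,g)$ is isometric to a quotient of $S^n$ or $S^{n-1} \times \R$.

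The main obstacle is not in the logical structure, which is a direct concatenation of \cite{BrePIC}, Theorem~\ref{cones}, and Theorem~\ref{thm1}. Rather, the care lies in confirming that Brendle's cones meet the precise hypotheses of Theorem~\ref{cones}, namely membership in $\mathscr{C}_B(\R^n)$ and property $(\ast)$. Both are essentially established in \cite{BrePIC} as part of showing the family is preserved under the Ricci flow, but the definitions must be matched with those adopted in Section~3. The dimensional restriction $n \ge 12$ enters solely through the availability of Brendle's family; in the range $5 \le n < 12$ no such family is presently known, which is exactly what prevents one from dropping the $\PIC2$ assumption in Theorem~\ref{thm1} in those dimensions.
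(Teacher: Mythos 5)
Your overall strategy --- pipe the uniformly PIC hypothesis through Brendle's continuous family of cones via Theorem~\ref{cones} and land in the hypotheses of Theorem~\ref{thm1} --- matches the paper's. But two of the specific claims you make about Brendle's cones are wrong, and as a result your argument has two genuine gaps, each of which the paper fills with a dedicated lemma.

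First, it is not true that the initial cone of Brendle's family contains every uniformly PIC curvature tensor. Brendle's Definition~3.1 in \cite{BrePIC} imposes, in addition to a PIC-type pinching condition, a lower bound on $\Ric_{11} + \Ric_{22}$ in terms of $\scal$. Uniform PIC alone does not give you that. The paper supplies the missing ingredient with Lemma~\ref{ricci_pinched}: using a maximum-principle argument on the eigenvalue function $\lambda_1 + \lambda_2 - \theta\,\scal$ (made global via the weight $\varphi = (f+n)^2$), it shows that a non-flat uniformly PIC shrinking soliton has uniformly two-positive Ricci curvature. Only after this is established can one conclude, by setting $T = \delta\,\scal\,I$ in Brendle's Definition~3.1, that the curvature lies in $\hat C(b)$ for some small $b > 0$. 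Without this step, the very first hypothesis of Theorem~\ref{cones} --- that $R(x) \in C(0)$ everywhere --- is unverified.

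Second, Brendle's family pinches towards the $\PIC1$ cone, not the $\PIC2$ cone: the paper's concatenated family $\hat C(b)$ limits to $C(b_{\max}) \cap \PIC1 \subset \PIC1$. So applying Theorem~\ref{cones} gives you weakly $\PIC1$, not weakly $\PIC2$, and you cannot yet invoke Theorem~\ref{thm1}. The paper bridges this with Lemma~\ref{furtherpinched}, another maximum-principle argument showing that a weakly $\PIC1$ shrinking soliton is automatically weakly $\PIC2$ --- this is a nontrivial soliton-specific improvement (it would fail for a general Riemannian metric). You need that lemma, or some substitute, to reach the $\PIC2$ hypothesis of Theorem~\ref{thm1}.
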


It is an interesting question if one can weaken the assumption of uniformly $\PIC$ to strictly $\PIC$ in the theorem above. This would give a classification result which agrees with the stronger classification currently available in dimension four.

The structure of this note is as follows. In sections 2 and 3, we review the relevant definitions, results, and conventions necessary for our results . In section 4, we give proofs of some auxiliary results needed. In sections 5, 6, and 7 we give the proofs of Theorems 1.1, 1.3, and 1.4 respectively. \\

\textbf{Acknowledgements.} The author would like to thank his advisor Simon Brendle for suggesting this question and for his encouragement and guidance. 

\section{Gradient Shrinking Solitons} 
We begin with a brief review of some of the consequences of the soliton equation. There are standard identities 
\begin{gather*}
\scal + \Delta f = \frac{n}{2}, \\
\Ric(\nabla f) = \div (\Ric )=  \frac{1}{2} \nabla \scal, \\
d(\scal + |\nabla f|^2 - f) = 0.
\end{gather*} 
The first identity follows from tracing the soliton equation. The second and third identities follow from taking the divergence of the soliton equation and applying the second Bianchi identity. In particular, after possibly shifting $f$ by a constant, we may assume that 
\begin{equation*}\label{sol_eq}
\scal + |\nabla f|^2 = f. 
\end{equation*}
A Ricci gradient shrinking soliton $(M, g, f)$ is called \textit{normalized} if the equation above holds. When $(M, g, f)$ is normalized and has nonnegative scalar curvature (for example if we assume PIC), this implies useful estimates $0 \leq \scal \leq f$ and $|\nabla f|^2 \leq f$.

We next recall the soliton potential growth-estimate due to Cao and Zhou in \cite{CZ}. They have shown that for any normalized complete gradient shrinking soliton $(M, g, f)$, there exists constants $c_1$, $c_2$, and $r_0$ depending on $n$ and the geometry of of $g$ in a ball such that 
\begin{equation*}
\frac{1}{4}\big(r(x) -c_1)^2 \leq f(x) \leq\frac{1}{4} \big(r(x) + c_2)^2
\end{equation*}
for $r(x) \geq r_0$, where $r(x) = d(x,p)$ is the distance function from a point which minimizes $f$. In particular $f(x) \to \infty$ as $d(x,p) \to \infty$.

Finally, we recall the following standard elliptic evolution equations for gradient shrinking Ricci solitons, 
\begin{align*}
\Delta_f R &= R - Q(R), \\
\Delta_f \Ric_{ik} &= \Ric_{ik} - 2\sum_{p, q =1}^nR_{ipkq} \Ric_{pq}, \\
\Delta_f \scal &= \scal - 2|\Ric|^2.
\end{align*}
Here $\Delta_f = \Delta - \nabla_{\nabla f}$. These identities follow from the standard parabolic evolutions equations for the associated Ricci flow of $(M, g, f)$ given by 
\[
g(t) = (-t) \varphi^{\ast}_{-\log(-t)} g
\]
where $\varphi_t$ is the 1-parameter flow for the vector field $\nabla f$. Here $Q(R)$ is given by
\begin{equation*}
Q(R)_{ijkl} = \sum_{p,q =1}^nR_{ijpq}R_{klpq} + 2\sum_{p,q =1}^n R_{ipkq}R_{jplq} - R_{iplq}R_{jpkq}.
\end{equation*}

\section{Preserved Cone Conditions in Ricci Flow}
We let $\mathscr C_B(\R^n)$ denote the space of curvature tensors on $\R^n$ satisfying the algebraic Bianchi identities. This is a subset of the space $S^2(\Lambda^2 \,\R^n)$ and comes equipped with a natural action of $O(n)$. We similarly define $\mathscr C_B(T_pM)$ and let $\mathscr C_{B}(TM)$ denote the vector bundle of algebraic curvature tensors over $(M, g)$. Preserved cones have played an important role in understanding higher dimensional Ricci flow. We briefly review some definitions and notations. For more information, see Chapter 7 of \cite{Bbook}. 

The curvature positivity conditions we will be concerned with here are the PIC, PIC1, and PIC2 conditions. Given a Riemannian manifold $(M, g)$, we say its curvature tensor $R$ is weakly PIC (or has nonnegative isotropic curvature) if for every $p \in M$ and every orthonormal four frame $e_1, e_2, e_3, e_4 \in T_pM$,  
\begin{equation*}
R_{1313} + R_{1414} + R_{2323} + R_{2424} - 2R_{1234} \geq 0. 
\end{equation*}
R is weakly PIC1 if for every $p \in M$, every orthonormal four frame $e_1, e_2, e_3, e_4 \in T_pM$, and every $\lambda \in [0,1]$,  
\begin{equation*}
R_{1313} + \lambda^2R_{1414} + R_{2323} + \lambda^2 R_{2424} - 2\lambda R_{1234} \geq 0.
\end{equation*}
R is weakly PIC2 if for every $p \in M$, every orthonormal four frame $e_1, e_2, e_3, e_4 \in T_pM$, and every $\lambda, \mu \in [0,1]$,  
\begin{equation*}
R_{1313} + \lambda^2R_{1414} + \mu^2R_{2323} + \lambda^2\mu^2 R_{2424} - 2\lambda\mu R_{1234} \geq 0.
\end{equation*}

The conditions above define closed cones in $\mathscr C_B(\R^n)$. We denote these cones by PIC, PIC1, and PIC2 respectively. Let $C \subset \mathscr C_B(\R^n)$ be a cone. For any $R \in C$, we let $T_RC$ denote the tangent cone. We will say $C$ has property $(\ast)$ if it satisfies the following conditions:
\begin{enumerate}
\item[(i)] $C$ is closed, convex, $O(n)$-invariant, and of full-dimension. 
\item[(ii)] For every $R \in C \setminus \{0\}$, $Q(R)$ is contained in interior of $T_RC$. 
\item[(iii)] For every $R \in C \setminus \{0\}$, $\scal(R) > 0$. 
\item[(iv)] The curvature tensor $I_{ijkl} = \delta_{ik}\delta_{jl} - \delta_{il} \delta_{jk}$ lies in the interior of $C$. 
\end{enumerate}

In particular, PIC1 and PIC2 are examples of cones satisfying property $(\ast)$. The PIC cone has every property but $(ii)$; $Q(R)$ may be contained in the boundary of the tangent cone. An important consequence of the properties above is that for any cone $C$ with property $(\ast)$ there exist constants $\theta > 0$ and $\Theta < \infty$, depending only on the cone, such that for all $R \in C$, $Q(R) - \theta \,  \scal^2I$ is contained in the interior of $T_RC$ and $|R|^2 \leq \Theta^2\, \scal^2$. See Lemma \ref{cone_prop} below.

Given a Riemannian manifold $(M, g)$, any identification between $(T_pM, g_p)$ and $(\R^n, g_{\mathrm{flat}})$ gives an identification between $\mathscr C_B(\R^n)$ and $\mathscr C_B(T_pM)$. The image of any $O(n)$-invariant subset $\mathscr C_B(\R^n)$ is independent of the choice of identification. For this reason, if $C \subset \mathscr C_B(\R^n)$ is an $O(n)$-invariant cone and $R$ is the curvature tensor on $(M, g)$ we will abuse notation by writing $R(x) \in C$, but we mean the curvature tensor at $x$ is contained in the cone $C$ after making any identification of the tangent space with $\R^n$. 

The pointwise identifications can be chosen in a consistent manner in neighborhood of $M$ via parallel transport. Using an orthonormal basis at $p$, we first identify $\mathscr C_B(\R^n)$ with $\mathscr C_B(T_pM)$. Then we use the parallel transport maps to uniquely identify $\mathscr C_B(T_pM)$ and $\mathscr C_B(T_qM)$ for any point $q$ not in the cut locus of $p$. In other words, $\mathscr C_B(TM)$ in a trivializing chart $U$ can be identified with $U \times \mathscr C_B(\R^n)$. In this way, any $O(n)$-invariant cone $C$ is defined consistently in neighborhood of $M$. Recall the Kulkarni-Nomizu product: given two symmetric (0,2)-tensors $A$ and $B$, the product defines a (0,4)-tensor $A \owedge B$ with the symmetries of an algebraic curvature tensor. In any orthonormal 4-frame, the product is defined by 
\begin{equation*}
(A \owedge B)_{pqrs} = A_{pr}B_{qs} - A_{ps}B_{qr} - A_{qr}B_{ps} + A_{qs}B_{pr}. 
\end{equation*}
The tensor in property $(iv)$ above is $I = \frac{1}{2} g \owedge g$, which in a local orthonormal frame becomes $\frac{1}{2} \id \owedge \id$. 

\section{Auxillary Results}

PIC implies a bound for the largest eigenvalue of the Ricci tensor by the scalar curvature. This is Lemma A.3 in \cite{BrePIC}. 
\begin{lemma}\label{ricci_bound}
Suppose $n \geq 5$ and $R \in \PIC$. Then $\Ric_{nn} \leq \frac{1}{2} \scal$. 
\end{lemma}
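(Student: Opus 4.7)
The plan is to rewrite $\tfrac{1}{2}\scal - \Ric_{nn}$ as a sum of ``horizontal'' sectional curvatures and then show directly from $\PIC$ that this horizontal sum is nonnegative. Complete $e_n$ to an orthonormal basis $e_1, \ldots, e_n$ of $T_pM$. Using $\scal = 2\sum_{i<j} R_{ijij}$ and $\Ric_{nn} = \sum_{j < n} R_{njnj}$, one immediately obtains
\begin{equation*}
\tfrac{1}{2}\scal - \Ric_{nn} = \sum_{1 \le i < j \le n-1} R_{ijij},
\end{equation*}
so the task reduces to showing this sum over pairs of indices orthogonal to $e_n$ is nonnegative for $n \ge 5$.

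The key algebraic input is the following pointwise fact, reminiscent of the familiar statement that $\PIC$ implies nonnegative scalar curvature in dimension four: for any four distinct indices $a, b, c, d \in \{1, \ldots, n-1\}$,
\begin{equation*}
\sum_{\{i,j\} \subset \{a,b,c,d\}} R_{ijij} \ge 0.
\end{equation*}
I would prove this by applying the $\PIC$ inequality to the three 4-frame orderings corresponding to the three ways of partitioning $\{a,b,c,d\}$ into two unordered pairs, namely $(e_a, e_b, e_c, e_d)$, $(e_a, e_c, e_b, e_d)$, and $(e_a, e_d, e_b, e_c)$. Summing the three inequalities, each of the six sectional curvatures $R_{ijij}$ with $\{i,j\} \subset \{a,b,c,d\}$ appears exactly twice on the left, while the mixed components combine on the right to $2(R_{abcd} + R_{acbd} + R_{adbc})$, which equals $4 R_{acbd}$ by the first Bianchi identity $R_{abcd} + R_{acdb} + R_{adbc} = 0$. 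Applying the same argument after replacing $e_d$ by $-e_d$ flips the sign of this mixed term, so in combination we obtain $\sum_{\{i,j\} \subset \{a,b,c,d\}} R_{ijij} \ge 2|R_{acbd}| \ge 0$.

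The final step is purely combinatorial. Summing the 4-subset inequality over all $\binom{n-1}{4}$ subsets $\{a,b,c,d\} \subset \{1, \ldots, n-1\}$ and observing that each pair $\{i,j\}$ lies in exactly $\binom{n-3}{2}$ of them yields
\begin{equation*}
\binom{n-3}{2} \sum_{1 \le i < j \le n-1} R_{ijij} \ge 0.
\end{equation*}
Since $n \ge 5$ forces $\binom{n-3}{2} \ge 1$, the horizontal sum is nonnegative and the bound $\Ric_{nn} \le \tfrac{1}{2}\scal$ follows. No individual step is deep; the only obstacle is careful bookkeeping with the 4-frame permutations and sign conventions so that the first Bianchi identity produces the expected cancellation of mixed terms. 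The dimension hypothesis $n \ge 5$ enters precisely because one needs at least four indices besides $n$ to form any 4-subset in the argument above.
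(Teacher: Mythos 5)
Your proof is correct, and it is essentially a detailed realization of the paper's terse one-line argument that $\sum_{i,j=1}^{n-1} R_{ijij} \geq 0$ ``by summing over the PIC condition.'' The only minor stylistic deviation is that you invoke the first Bianchi identity to control the mixed terms: one can instead eliminate the mixed term in a single PIC inequality directly, by averaging the inequality for $(e_a, e_b, e_c, e_d)$ with the one for $(e_a, e_b, e_c, -e_d)$ (which flips the sign of $R_{abcd}$ but fixes the sectional terms), giving $R_{acac} + R_{adad} + R_{bcbc} + R_{bdbd} \geq 0$ at once; summing this over the three pairings of $\{a,b,c,d\}$ then yields your 4-subset inequality without Bianchi. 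Your route costs a little more bookkeeping but buys the slightly sharper intermediate bound $\sum_{\{i,j\}\subset\{a,b,c,d\}} R_{ijij} \geq 2|R_{acbd}|$, which the lemma does not need.
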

\begin{proof}
We have 
\begin{equation*}
\scal - 2 \Ric_{nn} = \sum_{j =1}^{n-1} \Ric_{jj} - \Ric_{nn} = \sum_{i, j = 1}^{n-1} R_{ijij} \geq 0, 
\end{equation*}
by summing over the PIC condition. 
\end{proof}

A curvature tensor $R$ contained in a cone $C$ satisfying property $(\ast)$ must have $Q(R)$ a uniform distance from the boundary of $T_RC$ and is uniformly bounded in norm by its scalar curvature. 

\begin{lemma} \label{cone_prop}
Suppose $n \geq 4$, and $C \subset \mathscr C_B(\R^n)$ a convex, closed cone. 
\begin{enumerate} 
\item If $C$ has property $(ii)$ above, then there exists $\theta$ such that for all $R \in C$, $Q(R) - \theta \, \scal^2 I \in T_RC$. 
\item If $C$ has property $(iii)$ above, then there exists $\Theta$ such that for all $R \in C$, then
\begin{equation*}
|R| = \sum_{i,j = 1}^n |R_{ijij}| \leq \Theta\, \scal. 
\end{equation*}
In particular, this is true for the PIC cone. 
\end{enumerate}
\end{lemma}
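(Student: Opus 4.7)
My plan is to reduce both parts of the lemma to compactness on the unit sphere of $C$, exploiting the cone structure to absorb the scaling.

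For part $(2)$, I would pick any norm $\|\cdot\|$ on the finite-dimensional space $\mathscr{C}_B(\R^n)$; by equivalence of norms it suffices to bound $\|R\|$ by a constant multiple of $\scal(R)$. The unit sphere $K := \{R \in C : \|R\| = 1\}$ is compact, and by property $(iii)$ the continuous linear functional $\scal$ is strictly positive on $K$ and thus attains a positive minimum $c > 0$ there. Applying this bound to $R/\|R\|$ for an arbitrary nonzero $R \in C$ yields $\scal(R) \geq c\|R\|$, which gives the claimed estimate after converting norms.

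For part $(1)$, I would first note that the inclusion $Q(R) - \theta \scal(R)^2 I \in T_R C$ is invariant under $R \mapsto sR$ for $s > 0$, since $Q(sR) = s^2 Q(R)$, $\scal(sR) = s\,\scal(R)$, and $T_{sR} C = T_R C$ is itself a cone. Hence it suffices to produce $\theta > 0$ that works uniformly for $R \in K$. I would argue by contradiction: suppose instead there exist $R_k \in K$ and $\theta_k \to 0^+$ with $Q(R_k) - \theta_k \scal(R_k)^2 I \notin T_{R_k} C$. Then Hahn--Banach yields a unit supporting functional $\ell_k$ of $C$ at $R_k$ (so $\ell_k|_C \geq 0$ and $\ell_k(R_k) = 0$) satisfying
\begin{equation*}
\ell_k(Q(R_k)) < \theta_k \scal(R_k)^2\,\ell_k(I).
\end{equation*}
Passing to a convergent subsequence, $R_k \to R_\infty \in K$ and $\ell_k \to \ell_\infty$, where $\ell_\infty$ is a unit supporting functional of $C$ at $R_\infty$. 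Taking $k \to \infty$ in the inequality gives $\ell_\infty(Q(R_\infty)) \leq 0$, while property $(ii)$ applied to $R_\infty \neq 0$ says $Q(R_\infty) \in \mathrm{int}(T_{R_\infty} C)$, which forces $\ell_\infty(Q(R_\infty)) > 0$ --- a contradiction.

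The main --- though mild --- technical point is this last implication: a vector in the interior of a closed convex cone must pair strictly positively with every nonzero supporting functional at the origin. If $\ell_\infty(Q(R_\infty))$ were $0$, then perturbing $Q(R_\infty)$ slightly in the direction $-\ell_\infty$ (identified with a vector via the inner product on $\mathscr{C}_B(\R^n)$) would push it out of $T_{R_\infty} C$, contradicting $Q(R_\infty) \in \mathrm{int}(T_{R_\infty} C)$. Apart from this observation, the rest is a standard soft compactness-and-separation argument and I do not anticipate genuine obstacles.
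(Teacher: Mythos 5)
Your argument for the core estimates in both parts is correct and rests on the same compactness-and-scaling strategy as the paper's: reduce to the unit sphere $\{|R|=1\}\cap C$, which is compact, and extract a contradiction from a failing sequence. For part (2), your direct "minimum of $\scal$ on a compact set" phrasing and the paper's proof by contradiction are interchangeable. For part (1), however, your route is a genuine and arguably cleaner refinement of the paper's. After extracting a convergent subsequence $R_k \to R_\infty$, the paper asserts that $T_{R_k}C$ converges continuously to $T_{R_\infty}C$, "taking boundary points to boundary points." As stated this is delicate and in fact can fail: for a closed convex cone whose boundary has a ridge (e.g.\ $C = \{(x,y,z): z \geq x + |y|\}$ with $R_\infty = (1,0,1)/\sqrt 2$ and $R_k \to R_\infty$ along the face $y > 0$), the tangent cones $T_{R_k}C$ stay half-spaces and do not converge to the smaller cone $T_{R_\infty}C$; a boundary point of $T_{R_k}C$ may converge to a point outside $T_{R_\infty}C$ entirely. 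What the paper actually needs, and what your argument proves, is only that the limit cannot land in $\mathrm{int}(T_{R_\infty}C)$. Passing to a limit $\ell_\infty$ of the unit supporting functionals $\ell_k$, using $\ell_\infty \geq 0$ on $C$ and $\ell_\infty(R_\infty) = 0$ to see that $\ell_\infty$ supports $T_{R_\infty}C$, and then observing that $\ell_\infty(Q(R_\infty)) \leq 0$ is incompatible with $Q(R_\infty) \in \mathrm{int}(T_{R_\infty}C)$, is exactly the right way to make this step rigorous. Your closing remark about interior points pairing strictly positively with nonzero supporting functionals is the correct finishing touch.

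One gap: you do not address the final sentence of the lemma, "In particular, this is true for the PIC cone." This requires verifying that the $\PIC$ cone satisfies property $(iii)$, namely that $R \in \PIC\setminus\{0\}$ forces $\scal(R) > 0$. This is not automatic from the definition: the paper's argument uses that $R \in \PIC$ implies $\scal \geq 0$, that $R \in \PIC$ with $\Ric = 0$ implies $R = 0$, and Lemma \ref{ricci_bound} to deduce $\Ric = 0$ from $\scal = 0$. You should supply this verification to complete the lemma.
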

\begin{proof}
For the first claim, note that $Q(R)$ depends smoothy upon on $R$. If $R = 0$, the claim is clear. Suppose $R \neq 0$. Since $C$ is a cone, for $a > 0$ we have $T_{aR}C = T_RC$ as subsets of $\mathscr C_B(\R^n)$. Thus $Q(R) - \theta \, \scal(R)^2$ is contained in the interior of $T_R{C}$ if $|R|^{-2}(Q(R) - \theta \, \scal^2)$ is contained in the interior of $T_RC = T_{|R|^{-1} R}C$. So we may assume that $|R| = 1$. If the claim is false, then there exists a sequence $R_k \in C$ with $|R_k | = 1$ so that $Q(R_k) - k^{-2} \scal(R_k)^2 \in \d T_{R_k} C$. The set $\{|R| =1 \} \cap C$ is compact, so there exists a subsequence converging to $R \in C$. Moreover, $T_{R_k}C$ converges to $T_RC$ continuously, taking boundary points to boundary points. So $Q(R)$ is contained in the boundary of $T_RC$ contradicting $(ii)$. 

The second claim follows similarly. Assume $R \neq 0$. Then $\scal > 0$. If the claim is false, then there exists a sequence $R_k \in C$ such that $|R_k| = 1$, and $\scal(R_k) \leq k^{-1}$. Since $\{|R| = 1\} \cap C$ is closed, we conclude there exists $R \in C$ such that $|R| = 1$, but $\scal = 0$, contradicting $(iii)$. 

To verify the second property for PIC, note by Proposition 7.3 in \cite{Bbook}, $R \in \PIC$ implies $\scal \geq 0$ and $R \in \PIC$ with $\Ric = 0$ implies $R = 0$. If $\scal = 0$, then by Lemma \ref{ricci_bound}, the largest eigenvalue of the Ricci tensor is nonpositive. If any eigenvalue is negative, then $\scal < 0$. Since it is not, we conclude $\Ric = 0$ and hence $R = 0$. Thus PIC has property (ii). 
\end{proof}

The following lemma shows that uniform two-positivity of the Ricci tensor is preserved for curvature tensors that are uniformly PIC. A lower bound for the sum of the smallest two eigenvalues of the Ricci tensor is one of the conditions required in the definition of Brendle's preserved family of cones. 

\begin{lemma}\label{pic_ricci}
Suppose $\delta > 0$, $n \geq 5$, and $R \in \mathscr C_B(\R^n) \setminus \{0\}$ satisfies $R - \delta \, \scal \, I \in \PIC$. Then, there exists a constant $\theta_0 = \theta_0(n, \delta) > 0$ such that if $\theta \in [0, \theta_0]$ and $\Ric_{11} + \Ric_{22} \leq \theta \, \scal$, then
\begin{equation*}
\Ric(Q(R))_{11} + \Ric(Q(R))_{22} - \theta \, \scal (Q(R)) > 0. 
\end{equation*}
In particular, the condition $\Ric_{11} + \Ric_{22} \geq \theta \, \scal $ is preserved by the Hamilton ODE. 
\end{lemma}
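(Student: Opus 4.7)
My plan is to verify directly the pointwise ODE inequality
\begin{equation*}
\Ric(Q(R))_{11} + \Ric(Q(R))_{22} - \theta\,\scal(Q(R)) > 0
\end{equation*}
in an orthonormal basis diagonalizing $\Ric$, with the uniform PIC margin $\delta$ supplying a positive lower bound that dominates the $\theta$-perturbation. The ``in particular'' preservation claim then follows from Hamilton's ODE maximum principle applied to the boundary of the cone $\{\Ric_{11} + \Ric_{22} \geq \theta\,\scal\}$ in $\mathscr C_B(\R^n)$.

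\textbf{Setup and expansion.} Choose an orthonormal basis $\{e_i\}$ with $\Ric_{ij} = \mu_i\delta_{ij}$ and $\mu_1 \leq \cdots \leq \mu_n$. By Lemma \ref{ricci_bound}, $\mu_n \leq \tfrac{1}{2}\scal$, and the hypothesis $\mu_1 + \mu_2 \leq \theta\,\scal$ combined with $\sum_i\mu_i = \scal$ forces the bulk bound $\sum_{p=3}^n \mu_p \geq (1-\theta)\scal$, so that for $\theta_0$ small the tail of the Ricci spectrum carries essentially all of $\scal$. Tracing $Q(R)_{ijij}$ over $j$ and using $\sum_j R_{jpjq} = \mu_p\delta_{pq}$ gives
\begin{equation*}
\Ric(Q(R))_{ii} = \sum_{j,p,q} R_{ijpq}^2 + 2\sum_p R_{ipip}\,\mu_p - 2\sum_{j,p,q} R_{ipjq}R_{jpiq},
\end{equation*}
together with the standard identity $\scal(Q(R)) = 2|\Ric|^2$.

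\textbf{The PIC gain.} Apply $R - \delta\,\scal\,I \in \PIC$ to the 4-frames $(e_1, e_2, e_p, e_q)$ and $(e_1, e_2, e_p, -e_q)$ for distinct $p, q \in \{3, \ldots, n\}$; averaging eliminates the antisymmetric $R_{12pq}$ cross term and yields
\begin{equation*}
(R_{1p1p} + R_{2p2p}) + (R_{1q1q} + R_{2q2q}) \geq 4\delta\,\scal.
\end{equation*}
Hence $R_{1p1p} + R_{2p2p} \geq 2\delta\,\scal$ for all but at most one $p \in \{3, \ldots, n\}$. Paired with the bulk mass of $\mu_p$ from the previous paragraph, this produces a positive main contribution $2\sum_p (R_{1p1p} + R_{2p2p})\mu_p \geq c(n,\delta)\,\scal^2$ for an explicit $c(n,\delta) > 0$.

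\textbf{Closing the estimate.} The squared contributions $\sum R_{ijpq}^2$ are nonnegative, while $|\Ric|^2$ and the sign-indefinite term $\sum R_{ipjq}R_{jpiq}$ are each controlled by $C(n)|R|^2 \leq C(n)\Theta^2\,\scal^2$ using the norm bound $|R| \leq \Theta\,\scal$ from Lemma \ref{cone_prop}(2). Choosing $\theta_0 = \theta_0(n,\delta)$ small enough that $c(n,\delta)\,\scal^2$ dominates the $\theta_0$-weighted error terms yields the strict inequality. The main obstacle is the sign-indefinite term $\sum R_{ipjq}R_{jpiq}$, which has no favorable structure under PIC and must be absorbed purely through the scalar-proportional norm bound; a secondary subtlety is that individual $\mu_p$ need not be nonnegative, so the positivity of the main contribution must be extracted from the bulk of the Ricci spectrum, and the ``at most one bad $p$'' case must be handled by absorbing its possible negative contribution against the same $\Theta\,\scal$ norm estimate. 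The dimensional hypothesis $n \geq 5$ enters both via Lemma \ref{ricci_bound} and to provide enough indices $p, q \in \{3,\ldots,n\}$ for the averaging step.
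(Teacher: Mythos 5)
There is a genuine gap, and it is precisely the term you flag as ``the main obstacle.'' You trace the defining formula for $Q(R)$ and arrive at
\begin{equation*}
\Ric(Q(R))_{ii} = \sum_{j,p,q} R_{ijpq}^2 + 2\sum_p R_{ipip}\,\mu_p - 2\sum_{j,p,q} R_{ipjq}R_{jpiq},
\end{equation*}
and you propose to ``absorb'' the last (sign-indefinite) term, and similarly the nonnegative first term, through the crude bound $|R| \leq \Theta\,\scal$ from Lemma~\ref{cone_prop}. That cannot work: the positive gain coming from uniform PIC has size of order $\delta\,\scal^2$, while each of these extra terms is a priori only controlled by $\Theta^2\,\scal^2$, where $\Theta = \Theta(n)$ is a fixed dimensional constant. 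Since nothing forces $\delta$ to be comparable to $\Theta$ (indeed $\delta$ is allowed to be arbitrarily small), the error you are trying to absorb dominates the main term. The resolution, used implicitly in the paper and standard in the Uhlenbeck-trick formulation of the Hamilton ODE, is that the two extra terms cancel exactly by the first Bianchi identity: one has the algebraic identity $\sum_{j,p,q} R_{ijpq}R_{kjpq} = 2\sum_{j,p,q} R_{ipjq}R_{jpkq}$, and hence $\Ric(Q(R))_{ik} = 2\sum_{p,q} R_{ipkq}\Ric_{pq}$, so that in the eigenbasis $\Ric(Q(R))_{11} + \Ric(Q(R))_{22} = 2\sum_j (R_{1j1j}+R_{2j2j})\lambda_j$ with no residual $|R|^2$ terms. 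Your argument as written never gets to this clean starting point and therefore cannot close.

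A secondary, related problem is the ``at most one bad $p$'' step. You acknowledge that the lone index $p$ with $R_{1p1p}+R_{2p2p} < 2\delta\,\scal$ produces a term of uncertain sign and propose to absorb it, again, ``against the same $\Theta\,\scal$ norm estimate.'' The same scale mismatch recurs: that error is of order $\Theta\,\scal\cdot|\lambda_p|\lesssim \Theta^2\scal^2$, which cannot be dominated by the $\delta\,\scal^2$ gain for small $\delta$. The paper instead centers the eigenvalues, writing
\begin{equation*}
2\sum_{j\geq 3}\bigl(R_{1j1j}+R_{2j2j}\bigr)\bigl(\lambda_j - \tfrac12(\lambda_1+\lambda_2)\bigr) + (\lambda_1+\lambda_2)^2,
\end{equation*}
so that every factor $\lambda_j - \tfrac12(\lambda_1+\lambda_2)$ with $j\geq 3$ is nonnegative and ordered. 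The single possibly-negative coefficient $R_{1p1p}+R_{2p2p}-2\delta\,\scal$ is then compensated exactly by the surplus at the other indices (using the uniform PIC inequality $R_{1p1p}+R_{2p2p}+R_{1q1q}+R_{2q2q}\geq 4\delta\,\scal$) paired with the monotonicity of the factors; the norm bound $|R|\leq\Theta\,\scal$ is only used to bound a term that is already multiplied by the small parameter $\theta$. In short, you have the right overall skeleton (diagonalize $\Ric$, exploit the $\delta$-pinched PIC inequality on frames $\{e_1,e_2,e_p,\pm e_q\}$, choose $\theta_0$ small), but both absorption steps rely on dominating an $O(\Theta^2\scal^2)$ error by an $O(\delta\scal^2)$ gain, which is not available; the Bianchi cancellation and the centering of the eigenvalues are the ideas you are missing.
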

\begin{proof}
Suppose $\Ric_{11} + \Ric_{22} \leq \theta\, \scal$. We need to show that 
\begin{equation*}
\frac{d}{dt}(\Ric_{11} + \Ric_{22} - \theta \, \scal ) =  \Ric(Q(R))_{11} + \Ric(Q(R))_{22} - \theta \, \scal (Q(R)) > 0. 
\end{equation*}
Without loss of generality we may assume $\Ric$ is diagonal and $\Ric_{11} \leq \dots \leq \Ric_{nn}$. Let $\lambda_i = \Ric_{ii}$. Then $\lambda_n \geq \frac{1}{n} \scal$ and by the Lemma \ref{ricci_bound}, $\lambda_n \leq \frac{1}{2} \scal$. Note the PIC condition implies $\scal > 0$.  Now we compute
\begin{align*}
\Ric(Q(R))_{11}+ \Ric(Q(R))_{22} &= 2 \sum_{j =1}^n \big(R_{1j1j} + R_{2j2j}\big)\lambda_j \\
& = 2 \sum_{j =1}^n \big(R_{1j1j} + R_{2j2j}\big)\big(\lambda_j - \frac{1}{2}(\lambda_1 + \lambda_2)  \big)  + \sum_{j=1}^n\big(R_{1j1j} + R_{2j2j}\big)(\lambda_1 + \lambda_2)  \\
& =  2 \sum_{j =3}^n \big(R_{1j1j} + R_{2j2j}\big)\big(\lambda_j - \frac{1}{2}(\lambda_1 + \lambda_2)  \big)  + (\lambda_1 + \lambda_2)^2 \\
& \geq 4 \delta \, \scal \sum_{j =3}^n \big(\lambda_j - \frac{1}{2}(\lambda_1 + \lambda_2)\big)\\
&\qquad \qquad + 2 \sum_{j=3}^n \big(R_{1j1j} + R_{2j2j} - 2 \delta \, \scal \big)\big(\lambda_j - \frac{1}{2}(\lambda_1 + \lambda_2)  \big) \\
& = 4 \delta \, \scal \big(\scal - \lambda_n  - \frac{n}{2}(\lambda_1 + \lambda_2)\big) \\
&\qquad \qquad + 2 \sum_{j=3}^n \big(R_{1j1j} + R_{2j2j} - 2 \delta \, \scal \big)\big(\lambda_j - \frac{1}{2}(\lambda_1 + \lambda_2)  \big) \\
& \geq  2\delta \, \scal^2 \big(1- n \theta \big) + 2 \sum_{j=3}^n \big(R_{1j1j} + R_{2j2j} - 2 \delta \, \scal \big)\big(\lambda_j - \frac{1}{2}(\lambda_1 + \lambda_2)  \big).
\end{align*}
Note $\lambda_j - \frac{1}{2}(\lambda_1 + \lambda_2) \geq 0$ for $j \geq 3$. We will show the sum is controlled by the first term. Uniformity of the PIC condition means for all distinct $p, q \in \{3, \dots, n\}$, 
\begin{equation*}
R_{1p1p} + R_{2p2p} + R_{1q1q} + R_{2q2q} \geq 4\delta\, \scal.  
\end{equation*} 
This condition implies that the expression $R_{1p1p} + R_{2p2p} - 2\delta \scal$ can be negative for at most one $p \in \{3, \dots, n\}$. We consider three cases:\\\\
\textit{Case 1:} Suppose that $R_{1p1p} + R_{2p2p} \geq 2\delta \, \scal$ for all $p \in \{3, \dots, n\}$. Then every term in the sum is nonnegative, so 
\begin{equation*}
2 \sum_{j=3}^n \big(R_{1j1j} + R_{2j2j} - 2 \delta \, \scal \big)\big(\lambda_j - \frac{1}{2}(\lambda_1 + \lambda_2)  \big) \geq 0.
\end{equation*}
\textit{Case 2:} Suppose that $R_{1p1p} + R_{2p2p} < 2\delta \, \scal$ for some $p \in \{3, \dots, n -1 \}$. Then $R_{1q1q} + R_{2q2q}  \geq 2 \delta \, \scal$ for all $q \in \{3, \dots, n\} \setminus \{p\}$ and in particular, 
\begin{equation*}
R_{1n1n} + R_{2n2n} - 2\delta \, \scal \geq 2\delta \, \scal - R_{1p1p} - R_{2p2p}.
\end{equation*}
This implies
\begin{equation*}
2\sum_{j =3}^n \big(R_{1j1j} + R_{2j2j} - 2 \delta \, \scal \big)\big(\lambda_j - \frac{1}{2}(\lambda_1 + \lambda_2) \big)  \geq  2\big(2 \delta \, \scal - R_{1p1p} - R_{2p2p}\big)\big(\lambda_{n} - \lambda_{p}\big) \geq 0
\end{equation*} 
\textit{Case 3:} Suppose that $R_{1n1n} + R_{2n2n} < 2 \delta \, \scal$. Then for all $q \in \{3, \dots, n-1\}$
\begin{equation*}
R_{1q1q} + R_{2q2q} - 2\delta \, \scal  \geq 2\delta \, \scal - R_{1n1n} -R_{2n2n}. 
\end{equation*}
This implies
\begin{align*}
2\sum_{j =3}^n \big(R_{1j1j} + R_{2j2j} -& 2 \delta \, \scal \big)\big(\lambda_j - \frac{1}{2}(\lambda_1 + \lambda_2) \big) \\
&  \geq 2\big(2\delta\, \scal - R_{1n1n} - R_{2n2n}\big)\bigg(\sum_{j=3}^{n-1} \big(\lambda_j- \frac{1}{2}(\lambda_1 + \lambda_2)\big) - \lambda_n + \frac{1}{2} (\lambda_1 + \lambda_2)\bigg) \\
& =  2\big(2\delta\, \scal - R_{1n1n} - R_{2n2n}\big)\big(\scal - 2\lambda_n -\frac{n-2}{2}(\lambda_1 + \lambda_2)\big) \\
& \geq -\big(2\delta\, \scal - R_{1n1n} - R_{2n2n}\big)(n-2)\theta \, \scal \\
& \geq -(n-2)\big(2\delta + \frac{|R_{1n1n}| + |R_{2n2n}|}{\scal}\big) \theta \, \scal^2 \\
& \geq -(n-2)\big(2\delta + C(n)\big) \theta \, \scal^2 \\
& \geq -\frac{\delta}{2}\, \scal^2
\end{align*}
where the last inequality holds for $0 \leq \theta \leq \hat \theta = \hat \theta(n, \delta)$, sufficiently small. Note we used that $|R_{1n1n}| + |R_{2n2n}| \leq C(n)\, \scal$ by Lemma \ref{cone_prop} to go from the fourth line to the fifth. 

By our casework above, for $\theta$ sufficiently small, we have
\begin{equation*}
\Ric(Q(R))_{11}+ \Ric(Q(R))_{22}  \geq   2\delta \, \scal^2 \big(1- n \theta \big) - \frac{\delta}{2} \scal^2  \geq \frac{\delta}{2} \, \scal^2
\end{equation*}
On the other hand, 
\begin{equation*}
\theta \, \scal(Q(R)) = 2 \theta\, |\Ric|^2 \leq 2 n \theta \, \lambda_n^2 \leq \frac{n}{2} \theta \, \scal^2.
\end{equation*}
Putting this together with our lower bound, we have
\begin{equation*}
\Ric(Q(R))_{11} + \Ric(Q(R))_{22} - \theta \, \scal (Q(R)) \geq \big(\frac{\delta}{2} - \frac{n}{2} \theta\big) \scal^2. 
\end{equation*}
The claim then follows for $\theta \leq \theta_0 = \theta_0(n , \delta) = \min\{\hat \theta(n, \delta) , \frac{\delta}{2n} \} > 0$.
\end{proof}

The following standard lemma is based on Hamiton's proof of the maximum principle for systems \cite{Ham86} (see also \cite{BW1}). We include it here for convenience. 
\begin{lemma}\label{bohmwilk}
Suppose $(M, g)$ is a Riemannian manifold and let $S$ be a local smooth section of $\mathscr C_B(TM)$ in a trivialization. That is, $S : U \to \mathscr C_B(\R^n)$.  Let $C \subset \mathscr C_B(\R^n)$ be a cone satisfying property $(\ast)$ and assume $S(x) \in C$ for every $x \in U$. Then for any $x \in U$ and $v \in T_xM$, $\nabla_v S(x) \in T_{S(x)}C$ and $\Delta S (x) \in T_{S(x)} C$.  
\end{lemma}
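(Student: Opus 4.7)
The plan is to exploit the definition of the tangent cone,
\[
T_R C = \overline{\bigcup_{t > 0} t^{-1}(C - R)},
\]
which, since $C$ is closed and convex, is itself a closed convex cone (in particular stable under nonnegative linear combinations). Both assertions will then follow by approximating $\nabla_v S(x)$ and $\Delta S(x)$ by difference quotients that manifestly lie in $T_{S(x)} C$.

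First I would fix normal coordinates centered at $x$ and use the trivialization of $\mathscr{C}_B(TM)$ obtained by parallel transporting an orthonormal frame of $T_x M$ along radial geodesics. At the base point $x$ the Christoffel symbols of $g$ and the induced connection coefficients on $\mathscr{C}_B(TM)$ both vanish. Consequently, in this trivialization, covariant derivatives of $S$ at $x$ agree with ordinary derivatives: $\nabla_v S(x) = v(S)(x)$ for any $v \in T_x M$, and $\nabla^2_{e_i, e_i} S(x) = \partial_i^2 S(x)$ for the chosen orthonormal basis $e_1, \dots, e_n$.

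For the first assertion, set $\gamma(t) = \exp_x(tv)$. Since $S(\gamma(t)) \in C$ for all small $t \geq 0$, the difference quotient
\[
\frac{S(\gamma(t)) - S(x)}{t} \in t^{-1}(C - S(x)) \subset T_{S(x)} C
\]
for every $t > 0$. As $t \to 0^+$ this quotient converges to $\nabla_v S(x)$, which therefore lies in $T_{S(x)} C$ by closedness of the tangent cone.

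For the Laplacian, the key is the convexity of $C$. Setting $\gamma_i(t) = \exp_x(t e_i)$, a Taylor expansion in the trivialization yields
\[
\frac{2}{t^2}\left(\tfrac{1}{2}\bigl(S(\gamma_i(t)) + S(\gamma_i(-t))\bigr) - S(x)\right) \longrightarrow \nabla^2_{e_i, e_i} S(x).
\]
By convexity, $\tfrac{1}{2}(S(\gamma_i(t)) + S(\gamma_i(-t))) \in C$, so the left-hand side lies in $t^{-2}(C - S(x)) \subset T_{S(x)} C$; closedness then forces $\nabla^2_{e_i, e_i} S(x) \in T_{S(x)} C$. Since $T_{S(x)} C$ is closed under sums, we conclude $\Delta S(x) = \sum_{i=1}^n \nabla^2_{e_i, e_i} S(x) \in T_{S(x)} C$. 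There is no genuine obstacle in this argument; the only care required is the book-keeping identifying covariant derivatives at $x$ with ordinary derivatives in the parallel-transport trivialization, which is routine.
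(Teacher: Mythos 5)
Your proof is correct, but it takes the ``primal'' route where the paper takes the ``dual'' one. The paper fixes an arbitrary supporting half-space $H$ of $C$ at $S(x)$ with inward normal $R_H$, considers the scalar function $t \mapsto \langle S(\gamma(t)), R_H\rangle$ along a geodesic, and observes that it is nonnegative with a zero at $t=0$; the interior-minimum conditions $f'(0)=0$, $f''(0)\geq 0$ then give $\langle \nabla_v S(x), R_H\rangle \geq 0$ and $\langle \Delta S(x), R_H\rangle \geq 0$ for every supporting $H$, hence membership in $T_{S(x)}C = \bigcap_H H$. You instead work directly with $T_{S(x)}C = \overline{\bigcup_{t>0} t^{-1}(C - S(x))}$, realizing $\nabla_v S(x)$ as a limit of one-sided difference quotients and $\nabla^2_{e_i,e_i}S(x)$ as a limit of symmetric second difference quotients, using convexity of $C$ to place the midpoint average inside $C$ and closedness of the tangent cone to pass to the limit. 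Both arguments lean on convexity (you to average, the paper to represent $T_{S(x)}C$ as an intersection of half-spaces) and both implicitly use the parallel-transport trivialization so that covariant and coordinate derivatives agree at $x$ along radial geodesics -- a point you make explicit and the paper leaves to the reader. The paper's version is the one that generalizes most naturally to Hamilton's vector-bundle maximum principle; yours is more elementary and self-contained. One small remark: for the first-derivative claim you actually get the stronger conclusion $\pm\nabla_v S(x) \in T_{S(x)}C$ by running $t \to 0^\pm$, matching the paper's $f'(0)=0$, though only membership is needed.
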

\begin{proof}
The tangent cone at interior points of $C$ is all of $\mathscr C_B(\R^n)$, so wherever $S(x)$ is contained in the interior of $C$, there is nothing to prove. Suppose $S(x) \in \d C$. The tangent cone $T_{S(x)}C$ is intersection of all half-spaces such that $S(x) \in \d H$ and $C \subset H$. Let $H \subset \mathscr C_B(\R^n)$ be any such supporting half-space of $C$. Denote the natural inner product on $\mathscr C_B(\R^n)$ by $\langle \cdot\,,\, \cdot \rangle$. Let $R_H$ be the normal vector defining $H$ so that $R \in H$ if $\langle R, R_H \rangle \geq 0$. Note that $|\langle R, R_H \rangle|$ is the distance from $R$ to $H$ in $\mathscr C_B(\R^n)$. We must show $\langle \nabla_v S(x), R_H \rangle \geq 0$ and $\langle \Delta S(x), R_H \rangle \geq 0$. Let $\gamma(t)$ be a geodesic in $M$ satisfying $\gamma(0) = x$ and $\gamma'(0) = v$. Define $f(t) = \langle S(\gamma(t)), R_H \rangle$. Then $f(t) \geq 0$ and attains a minimum at $t = 0$. Therefore 
\begin{gather*}
0 = f'(t) = \langle \nabla_v S(x), R_H \rangle,\\
0 \leq f''(t) = \langle \nabla^2_{v,v} S(x), R_H \rangle. 
\end{gather*}
Since $v$ is arbitrary, summing the second identity over an orthonormal basis at $x$ implies $\langle \Delta S(x), R_H \rangle \geq 0$. Since $H$ was arbitrary, the lemma is proven. 
\end{proof}

\section{Proof of Theorem 1.1}

\begin{proof}
This theorem is a simple extension of Theorem 2 in [MW17] using convergence results of \cite{BrendleConv} instead of \cite{BW2}. By working on the universal cover, we may assume $(M, g)$ is simply connected. Since our soliton is weakly $\PIC2$ and strictly $\PIC$, by Proposition 6.5 in \cite{BrePIC}, either $\Ric$ has a zero and $(M, g)$ splits off a line or $\Ric > 0$. If $(M, g)$ is isometric to $(N, g_N) \times \R$, then one can easily verify that with $f_N = f|_{N\times\{0\}}$, the triple $(N, g_N, f_N)$ is a complete gradient shrinking soliton of lower dimension satisfying the same assumptions as $(M, g, f)$ and additionally $\Ric_{g_N} > 0$. So it suffices to assume $\Ric > 0$. The condition weakly PIC2 implies nonnegative sectional curvature. Together with the assumption $\Ric > 0$, the aforementioned result of Munteanu and Wang implies $(M, g)$ is compact. Since $M$ is compact, it cannot split off a second line and so the argument given in Proposition 6.6 of \cite{BrePIC} implies $M$ is strictly PIC2. By the main result of \cite{BrendleConv}, the associated Ricci flow of $(M, g, f)$ converges to $S^n$, and the desired result follows. 
\end{proof}

\section{Proof of Theorem 1.3}

\begin{proof}
If $M$ is compact, then the result follows from Theorem 9 in \cite{BHS} because shrinking solitons are ancient solutions. So assume $M$ is noncompact and the soliton is normalized. Note because $M$ is not flat, $\scal > 0$. Recall normalized solitons satisfy $\scal \leq f$, and $|\nabla f|^2 \leq f$. 
Define 
\begin{equation*}
s_{\max} = \sup \{ s \in [0, 1] \, : \, \forall t \leq s,  R \in C(t)\}
\end{equation*}
By definition, $R \in C(s_{\max})$. We want to show $s_{\max} = 1$. We claim there exists some $\delta > 0$ such that for all $x \in M$, 
\begin{equation}\label{theclaim}
R - \delta \, \scal \,  I \in C(s_{\max}).
\end{equation}
The theorem follows from the claim: if it is true and $s_{\max} < 1$, then because the family of cones is continuous and contains $I$, this implies $R \in C(s_{\max} + s')$ for $s' > 0$ sufficiently small, contradicting the definition of $s_{\max}$. 

The claim \eqref{theclaim} will follow from the maximum principle. We will need a function to control the growth of curvature. In light of the fact that $\scal \leq f$, a suitable power of the soliton potential is the obvious choice. Define $\varphi : M \to \R$ by 
\begin{equation*}
\varphi = (f+n)^2 
\end{equation*}
Observe that 
\begin{align} \label{solitonpotential}
\Delta_f (f+n)^2 &= \Delta_f f^2 + 2n \Delta_f f \\
\nonumber & = 2 |\nabla f|^2 + 2f \Delta_f f + 2n\big(\frac{n}{2} - f\big) \\
\nonumber & = 2|\nabla f|^2 + n f - 2 f^2 + n^2 - 2nf \\
\nonumber & \leq n^2 - (n-2)f - 2f^2 \\
\nonumber & \leq n^2 \\
\nonumber & \leq (f+n)^2
\end{align}
where we used that $|\nabla f|^2 \leq f$ and $f \geq 0$. So we have $\Delta_f \varphi \leq \varphi$. Moreover, since $\scal \leq f$ and $f(x) \to \infty$ as $x \to \infty$, we have
\begin{equation}\label{phichoice}
\lim_{x \to \infty} \frac{\scal(x)}{\varphi(x)} = 0.
\end{equation}
Fix an arbitrary $\varepsilon > 0$ and $\delta > 0$ to be chosen later. Define a function $u : M \to \R$ by 
\begin{equation*}
u(x) = \inf \Big\{ \lambda \geq 0 \, : \, R(x)+\big(\varepsilon \varphi(x) - \delta\scal(x)\big)I + \lambda I \in C(s_{\max})\Big\}.
\end{equation*}
This function measures the distance in the direction of $I$ of the section $R + (\varepsilon \varphi - \delta\scal)I$ from the interior of the cone $C(s_{\max})$. By $O(n)$-invariance of the cone, the definition of $u$ is independent of the local orthonormal frame we use to define $I$. We claim that $u \equiv 0$ on $M$. Our choice of $\varphi$ such that \eqref{phichoice} holds implies that $u$ vanishes outside of a compact set in $M$. Suppose however $u \not \equiv 0$. Then there exists $x_0 \in M$ where $u$ attains its positive maximum $\lambda_0 > 0$. Let $U$ be a small open neighborhood of $x_0$. Define a smooth local section of $\mathscr C_B(TM)$ by
\begin{equation*}
S(x) = R(x) + \big(\varepsilon \varphi(x) - \delta\scal(x)\big)I + \lambda_0 I, 
\end{equation*}
with respect to some local orthonormal frame. In a local trivialization of $\mathscr C_B(TM)$, the section is a smooth map $S : U \to \mathscr C_B(\R^n)$.  Because $\lambda_0$ is the maximum of $u$, we have that $S(x) \in C(s_{\max})$ for all $x \in U$ and $S(x_0) \in \d C(s_{\max})$. Hence by Lemma \ref{bohmwilk}, $\Delta_f S(x_0) \in T_{S(x_0)}C (s_{\max})$. 

To derive a contradiction, we will show that $-\Delta_f S(x_0)$ is contained in the interior of $T_{S(x_0)}C(s_{\max})$. To this end, let $c = \varepsilon \varphi - \delta \scal + \lambda_0$ so that  $S = R + cI$. Then by straightforward computation 
\begin{align*}
Q(S) &= Q(R + cI) = Q(R) +2c \, \Ric \owedge \id +2nc^2I, \\
\scal(S) &= \scal + n(n-1)\big(\varepsilon \varphi - \delta \, \scal + \lambda_0\big). 
\end{align*}
Since $\varepsilon \varphi + \lambda_0 \geq 0$, this implies $\scal(S) \geq \scal - n(n-1)\delta \scal$. So if we assume $\delta \leq (2n(n-1))^{-1}$, then
\begin{equation} \label{scalofs}
\scal(S(x_0))^2 \geq \frac{1}{2}\scal(x_0)^2. 
\end{equation}
Now because $R(x_0) \in C(s_{\max})$, by definition of $\lambda_0$, we must have $\varepsilon \varphi(x_0) - \delta \, \scal(x_0) + \lambda_0 \leq 0$. In particular, 
\begin{equation}\label{cest}
|c(x_0)| = -\varepsilon \varphi(x_0) + \delta \, \scal(x_0) - \lambda_0 \leq \delta\,\scal(x_0). 
\end{equation}
Fix $\theta$ and $\Theta$ for $C(s_{\max})$ as in Lemma \ref{cone_prop}. Now we compute,
\begin{align*}
-\Delta_f S & = -\Delta_f R - \varepsilon \Delta_f\varphi I + \delta\Delta_f\scal \,I \\
& = Q(R) - R + \delta\,\scal\, I - 2\delta |\Ric|^2 I   - \varepsilon \Delta_f\varphi I  \\
& = Q(S) - 2\big(\varepsilon \varphi - \delta \, \scal + \lambda_0) \Ric \owedge \id - 2n(\varepsilon \varphi - \delta \, \scal + \lambda_0) ^2 I \\ 
& \qquad \qquad -S + \varepsilon \varphi I + \lambda_0 I - 2\delta |\Ric|^2 I - \varepsilon \Delta_f\varphi \, I   \\
& = (Q(S) - \theta\, \scal(S)^2I)  + \theta \,\scal (S)^2 I  - S + \varepsilon(\varphi - \Delta_f \varphi) I  + \lambda_0 I \\
& \qquad \qquad   - 2\delta |\Ric|^2 I - 2\big(\varepsilon \varphi- \delta \, \scal + \lambda_0) \Ric \owedge \id- 2n(\varepsilon \varphi - \delta \, \scal + \lambda_0) ^2 I 
\end{align*}
In our final expression above for $-\Delta_f S(x_0)$ the terms on the first line are contained in $T_{S(x_0)}C(s_{\max})$. We need to control the remaining terms which are not contained in the tangent cone. By \eqref{scalofs} and \eqref{cest}, at $x_0$ these terms are controlled in norm by $\scal(x_0)^2$ and hence by $\scal(S(x_0))^2$. Specifically, at $x_0$ there holds
\begin{align*}
2|\varepsilon \varphi - \delta \, \scal + \lambda_0| \big|\Ric \owedge \id\big| &\leq C(n) \,  \Theta\, \delta \, \scal^2, \\
 2n(\varepsilon \varphi - \delta \, \scal + \lambda_0)^2 |I| &\leq C(n) \, \delta \,  \scal^2, \\
 2\delta |\Ric|^2 |I|&\leq C(n) \,  \Theta^2 \delta \, \scal^2,
\end{align*}
where $C(n)$ is a constant only depending on the dimension. Thus for $\delta$ sufficiently small depending only upon $n, \theta$, and $\Theta$, the terms estimated above are dominated by the term $ \theta \,\scal (S)^2 I $, which is contained in the interior of the tangent cone. For such $\delta$, we conclude that $-\Delta_f S(x_0)$ is contained in the interior of $T_{S(x_0)}C(s_{\max})$, a contradiction. Thus we must have $u(x_0) =0$, and hence $u \equiv 0$ on $M$. In other words, $R - \delta \, \scal\, I + \varepsilon \varphi I \in C(s_{\max})$. By taking $\varepsilon \to 0$, the proof of the claim, and hence the theorem is complete. 
\end{proof}

\section{Proof of Theorem 1.4}

We begin by showing that the Ricci tensor of a uniformly PIC shrinking soliton is uniformly two-positive. 

\begin{lemma}\label{ricci_pinched}
Suppose $\delta > 0$, $n\geq 5$ and $(M, g, f)$ is a non-flat $n$-dimensional complete gradient shrinking soliton with curvature tensor satisfying $R - \delta \, \scal \, I \in \PIC$. Then the Ricci tensor of $(M, g)$ is uniformly two-positive: there exists $\theta = \theta(n, \delta) > 0$ such that 
\begin{equation*}
\Ric_{11} + \Ric_{22} \geq \theta \, \scal.
\end{equation*} 
\end{lemma}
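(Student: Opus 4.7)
The plan is to run a maximum-principle argument in the spirit of the proof of Theorem \ref{cones}, with Lemma \ref{pic_ricci} supplying the requisite strict positivity at the would-be extremum. Since $(M,g)$ is non-flat with $R \in \PIC$, $\scal > 0$ everywhere by the strong maximum principle, and after shifting $f$ we may assume the soliton is normalized, $\scal + |\nabla f|^2 = f$. Let $\theta = \theta_0(n,\delta) > 0$ be the constant from Lemma \ref{pic_ricci}, let $\varphi = (f+n)^2$ (which satisfies $\Delta_f \varphi \leq \varphi$ by the calculation \eqref{solitonpotential} in the proof of Theorem \ref{cones}), and for each $\varepsilon > 0$ define
$$u_\varepsilon(x) = \lambda_1(x) + \lambda_2(x) - \theta\, \scal(x) + \varepsilon \varphi(x),$$
where $\lambda_1 \leq \lambda_2$ are the two smallest eigenvalues of $\Ric$ at $x$. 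I would prove $u_\varepsilon \geq 0$ on $M$ for every $\varepsilon > 0$ and then send $\varepsilon \to 0$.

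By Lemma \ref{cone_prop}(2) applied to $\PIC$, $|\Ric| \leq \Theta\, \scal \leq \Theta f$, while $\varepsilon \varphi \geq \varepsilon f^2$. Since $f(x) \to \infty$ as $d(x,p) \to \infty$ by Cao--Zhou, $u_\varepsilon(x) \to +\infty$ at infinity, so if $u_\varepsilon < 0$ somewhere it attains a negative minimum at some $x_0 \in M$. At $x_0$, I choose an orthonormal basis $(e_i)_{i=1}^n$ of $T_{x_0} M$ diagonalizing $\Ric$ with $\Ric(e_i,e_i) = \lambda_i$, and extend it to a local smooth orthonormal frame $(\tilde e_i)$ by parallel transport along geodesics from $x_0$. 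Since $\lambda_1 + \lambda_2$ is the infimum over orthonormal pairs of $\Ric(v,v)+\Ric(w,w)$, the smooth function
$$\tilde u_\varepsilon := \Ric(\tilde e_1,\tilde e_1) + \Ric(\tilde e_2,\tilde e_2) - \theta\, \scal + \varepsilon \varphi$$
satisfies $\tilde u_\varepsilon \geq u_\varepsilon$ with equality at $x_0$, so it also has a local minimum at $x_0$ and $\Delta_f \tilde u_\varepsilon(x_0) \geq 0$.

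Using the standard fact that for the radial parallel extension $\nabla \tilde e_i(x_0) = 0$ and $\Delta \tilde e_i(x_0) = 0$, the soliton evolution equations $\Delta_f \Ric_{ik} = \Ric_{ik} - \Ric(Q(R))_{ik}$ and $\Delta_f \scal = \scal - \scal(Q(R))$ together with $\Delta_f \varphi \leq \varphi$ give at $x_0$
$$\Delta_f \tilde u_\varepsilon(x_0) \leq u_\varepsilon(x_0) - \bigl[\Ric(Q(R))_{11} + \Ric(Q(R))_{22} - \theta\, \scal(Q(R))\bigr].$$
At $x_0$ we have $u_\varepsilon(x_0) < 0$ and $\lambda_1 + \lambda_2 < \theta\, \scal$, so Lemma \ref{pic_ricci} forces the bracketed expression to be strictly positive. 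Hence $\Delta_f \tilde u_\varepsilon(x_0) < 0$, contradicting the local minimum. Thus $u_\varepsilon \geq 0$ on $M$, and sending $\varepsilon \to 0$ finishes the proof.

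The main technical obstacle is the reduction $\Delta_f \tilde u_\varepsilon(x_0) = (\Delta_f \Ric)(\tilde e_1,\tilde e_1)(x_0) + (\Delta_f \Ric)(\tilde e_2,\tilde e_2)(x_0) - \theta \Delta_f \scal(x_0) + \varepsilon \Delta_f \varphi(x_0)$, which rests on the cancellations $\nabla \tilde e_i(x_0) = 0$ and $\Delta \tilde e_i(x_0) = 0$ for the radial-parallel extension. Once this Hamilton-style frame computation is verified, the remainder is sign-tracking driven by the strict ODE preservation in Lemma \ref{pic_ricci} beating the $\varepsilon\varphi$ perturbation, exactly as in Theorem \ref{cones}.
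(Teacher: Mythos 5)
Your proof is correct and follows essentially the same route as the paper's. The paper defines the identical test function $u = \lambda_1 + \lambda_2 - \theta\,\scal + \varepsilon\varphi$ with $\varphi = (f+n)^2$, argues $u \geq 0$ outside a compact set via the Cao--Zhou growth estimate and $|\Ric| \leq \Theta\,\scal \leq \Theta f$, invokes Lemma~\ref{pic_ricci} to show $\Delta_f u \leq u$ ``in the sense of barriers'' wherever $\lambda_1 + \lambda_2 \leq \theta\,\scal$, and derives the same contradiction at a hypothetical negative interior minimum. Your version is marginally more explicit in unpacking the barrier inequality: you spell out the radial-parallel frame extension and the vanishing of $\nabla \tilde e_i(x_0)$ and $\Delta \tilde e_i(x_0)$, where the paper simply writes ``in the sense of barriers'' and leaves the frame computation implicit. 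This is a fair expansion rather than a different argument.
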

\begin{proof}
The proof is essentially a combination of two ideas used above. First, uniformity of the PIC assumption implies a good evolution equation for the sum of the two smallest eigenvalues of Ricci. In the presence of bounded curvature, this is enough to show two-positivity for ancient solutions. Since we do not assume bounded curvature, the second input is that uniformity of the PIC assumption implies the soliton potential controls the norm of the Ricci tensor.

Assume the soliton is normalized. We assume $M$ is noncompact. The argument also works with obvious modifications if $M$ is compact and hence has bounded curvature. Choose any $\theta \in [0, \theta_0]$ as in Lemma \ref{pic_ricci}. Let $\lambda_i : M \to \R$ denote the eigenvalues of the Ricci tensor with $\lambda_1 \leq \dots \leq \lambda_n$. For $x \in M$, let $e_1, \dots, e_n \in T_xM$ be an eigenbasis of Ricci. Then at $x$, in the sense of barriers, we have 
\begin{equation*}
\Delta_f(\lambda_1 + \lambda_2 - \theta\, \scal) \leq (\lambda_1 + \lambda_2 - \theta \, \scal) - \Ric(Q(R))_{11} - \Ric(Q(R))_{22} +  \theta \, \scal (Q(R))\\
\end{equation*} 
By Lemma \ref{pic_ricci}, whenever $\lambda_1 + \lambda_2  \leq \theta \, \scal$, 
\begin{equation*}
 \Ric(Q(R))_{11} + \Ric(Q(R))_{22} -   \theta \, \scal (Q(R))> 0, 
\end{equation*}
and thus $\Delta_f(\lambda_1 + \lambda_2 - \theta \, \scal) \leq (\lambda_1 + \lambda_2 - \theta \, \scal)$, whenever $\lambda_1 + \lambda_2  - \theta \, \scal \leq 0$,  

As in the proof of Theorem \ref{cones}, let $\varphi = (f+n)^2$. Recall that $\Delta_f \varphi \leq \varphi$ and $\varphi > 0$. 
Fix an arbitrary $\varepsilon > 0$. Define a function 
\begin{equation*}
u= \lambda_1 + \lambda_2 - \theta \, \scal +  \varepsilon \varphi. 
\end{equation*} 
Then we have $\Delta_f u \leq u $ whenever $\lambda_1 + \lambda_2  - \theta \, \scal \leq 0$. 

Now by Lemma \ref{cone_prop} for some constant $C(n)$, we have 
\begin{equation*}
|\lambda _1 + \lambda _2| +\theta \, \scal \leq 2|\Ric| + \theta\, \scal \leq C(n)\scal \leq C(n)f. 
\end{equation*} Therefore 
\begin{equation*}
u \geq - |\lambda_1 + \lambda_2| - \theta \, \scal + \varepsilon f^2 \geq -C(n)f + \varepsilon f^2
\end{equation*} 
Since $f(x) \to \infty$ as $d(x, p) \to \infty$, we conclude $u \geq 0$ outside of a compact set in $M$ and that $u \geq - C$ on $M$ for some $C > 0$. Suppose the function attains its minimum at $x_0$. If $u(x_0) < 0$, then since the soliton potential is positive, we must have $(\lambda_1 + \lambda_2 - \theta \, \scal )(x_0) < 0$. So $\Delta_f u(x_0) \leq u(x_0) < 0$. This contradicts the fact that $\Delta_f u (x_0) \geq 0$ at the minimum. Therefore, $u \geq 0$ on $M$ and by taking $\varepsilon \to 0$, we have $\lambda_1 + \lambda_2 \geq \theta \, \scal$, as was claimed. 
\end{proof}

\begin{lemma}\label{furtherpinched}
Suppose $n \geq 5$ and $(M, g, f)$ is an $n$-dimensional gradient shrinking soliton with curvature tensor that is weakly PIC1. Then the curvature tensor of $(M, g)$ is weakly PIC2. 
\end{lemma}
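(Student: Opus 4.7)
The plan is to adapt the maximum principle argument of Theorem \ref{cones} and Lemma \ref{ricci_pinched} to the smaller cone $\PIC2$, using the soliton potential as a coercive barrier. The key structural input is that $\PIC2$ satisfies property $(\ast)$, and that adding a large enough positive multiple of $I$ to any $R \in \PIC1$ produces an element of $\PIC2$ (since $I$ is interior to $\PIC2$).

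Concretely, I would fix $\varepsilon > 0$ and define
\begin{equation*}
u(x) = \inf\bigl\{\lambda \geq 0 \, : \, R(x) + (\varepsilon \varphi(x) + \lambda)\,I \in \PIC2\bigr\},
\end{equation*}
with $\varphi = (f+n)^2$. Since $R \in \PIC1$ gives $|R| \leq \Theta \, \scal \leq \Theta f$ by Lemma \ref{cone_prop}, and $\varphi \sim f^2$ at infinity, the function $u$ vanishes outside a compact set. If $u \not\equiv 0$, let $x_0$ be a point where $u$ attains its positive maximum $\lambda_0$, and set $S = R + (\varepsilon\varphi + \lambda_0)\,I$, so $S(x_0) \in \d\PIC2$ and $S \in \PIC2$ nearby. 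Lemma \ref{bohmwilk} then gives $\Delta_f S(x_0) \in T_{S(x_0)}\PIC2$. Using $\Delta_f R = R - Q(R)$ and the quadratic expansion $Q(R) = Q(S) - 2c\, \Ric \owedge \id - 2nc^2\, I$ with $c = \varepsilon\varphi(x_0) + \lambda_0$, one computes
\begin{equation*}
-\Delta_f S = \bigl(Q(S) - \theta \scal(S)^2 I\bigr) - S + \bigl(\theta \scal(S)^2 + c - \varepsilon \Delta_f \varphi\bigr) I - 2c\,\Ric \owedge \id - 2nc^2\,I.
\end{equation*}

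The first three groups on the right lie in $T_{S(x_0)}\PIC2$: the first in the interior by property $(\ast)(ii)$ for $\PIC2$, the second along the cone ray through $S$, and the third as a positive multiple of the interior element $I$ (its coefficient is at least $\lambda_0 > 0$, using $\Delta_f \varphi \leq \varphi$). A contradiction with $\Delta_f S(x_0) \in T_{S(x_0)}\PIC2$ would then follow if the correction terms $-2c\,\Ric \owedge \id$ and $-2nc^2\,I$ can be absorbed into the strict-interior margin $\theta \scal(S)^2\, I$.

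The main obstacle is precisely this last absorption. In Theorem \ref{cones} the parameter $\delta$ in the offset $-\delta \scal\,I$ supplied a small factor multiplying the correction terms, so $\delta\cdot O(\scal^2)$ was defeated by $\theta \scal(S)^2$. Here the only available bound on $c$ is $c \leq C(n,\Theta)\,\scal$, coming from $R \in \PIC1$ and the fact that some fixed multiple of $\scal\cdot I$ added to $R$ lands in $\PIC2$; hence the corrections are genuinely $O(\scal^2)$ with no small factor. To close the argument I would pursue one of two routes: either (a) construct a continuously varying family of preserved cones $C(s)_{s\in[0,1]}$ with $C(0) = \PIC1$, $C(1) = \PIC2$, each satisfying property $(\ast)$, and apply Theorem \ref{cones} directly -- the substantive work being the verification of $(\ast)(ii)$ along the interpolation, in the spirit of Brendle--Schoen's preserved families; or (b) apply a strong maximum principle dichotomy to the associated self-similar Ricci flow, so that either $R$ is strictly $\PIC1$ (providing quantitative room to close the direct estimate) or $(M,g)$ splits off a flat factor, whereupon the shrinker structure descends to a lower-dimensional weakly $\PIC1$ factor and induction on dimension completes the proof.
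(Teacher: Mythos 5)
You correctly diagnose the central obstruction---the correction terms are of size $O(\scal^2)$ with no small prefactor to absorb into $\theta\,\scal(S)^2\,I$---but neither of your two fallback routes is actually carried out, and neither clearly closes. Route (a) requires an interpolating family of cones from $\PIC1$ to $\PIC2$ each satisfying property $(\ast)$; constructing such a family is precisely the hard part, and no candidate is offered. Route (b) does not repair the quantitative deficit: the problematic size of $c$ comes from the scalar-controlled magnitude of $R$ in $\PIC1$, not from a failure of strictness of the PIC1 condition, so restricting to the strictly-PIC1 stratum does not make the absorption go through, and the splitting/induction branch is left undeveloped.

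The paper's actual proof takes a genuinely different route: it abandons the cone-distance functional $u$ altogether and runs a scalar maximum principle directly on the PIC2 test expression of $S = R + \varepsilon\,\scal\,I$ (a \emph{small}, not large, perturbation). The key new idea is the penalty term $\kappa\,\varepsilon\,\varphi(x)\,(1-\lambda^2)(1-\mu^2)$, whose prefactor $(1-\lambda^2)(1-\mu^2)$ vanishes exactly when the PIC2 test degenerates to a PIC1 test ($\lambda=1$ or $\mu=1$), where the hypothesis already provides nonnegativity for free. When the PIC2 expression of $S$ is negative, the uniform slack $\varepsilon\,\scal$ built into $S$ together with $|S|\le C(n)\Theta\,\scal$ forces both $1-\lambda^2$ and $1-\mu^2$ to be bounded below by a constant depending on $\varepsilon, n$, so a finite $\kappa$ makes the penalized quantity nonnegative everywhere. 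Taking $\kappa_0$ minimal and localizing at a touching configuration, Brendle's Proposition 7.27 (an algebraic Hamilton-ODE lemma controlling the sign of the $Q$-expression for weakly PIC1 tensors at PIC2-degenerate frames) together with the strict inequality $\Delta_f\varphi < \varphi$ forces $\kappa_0 = 0$. The lesson is that no interpolating preserved cone is required; one instead designs a penalty adapted to the interpolation parameters $\lambda,\mu$ that are already intrinsic to the definition of PIC2.
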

\begin{proof}
Assume the soliton is normalized. Because $R \in \PIC1$, there exists $\Theta = \Theta(n)$ such that $|R| \leq \Theta \, \scal$. When $(M, g)$ is flat, the theorem is clear. Assume $(M, g)$ is not flat and $\scal > 0$. Fix $0 < \varepsilon \leq 1$ arbitrarily small and let $\varphi = (f+n)^2$ as in the proof of Theorem \ref{cones}. As opposed to previously, here we will use the stronger conclusion that $\Delta_f \varphi < \varphi$, which is true because $f > 0$ (see \eqref{solitonpotential}).  Define a curvature tensor on $M$ by $S = R + \varepsilon \, \scal \, I$. Note that $|S| \leq C(n) \Theta\,\scal$.  We claim there exists $\kappa < \infty$ such that 
\begin{equation}\label{goodeq}
S_{1313} + \lambda^2 S_{1414} + \mu^2 S_{2323} + \lambda^2 \mu^2 S_{2424} - 2\lambda\mu S_{1234} + \kappa\,\varepsilon\, \varphi(x) \, (1-\lambda^2)(1-\mu^2) \geq 0
\end{equation}
for all $x \in M$, $\{e_1, e_2, e_3, e_4 \} \subset T_xM$ orthonormal, and $\lambda, \mu \in [0,1]$. To see this, suppose we have $x, \{e_1, e_2, e_3, e_4\}, \lambda, \mu$ and $\kappa$ such that the inequality above does not hold. Because the last term in the inequality above is nonnegative, failure to hold implies 
\begin{equation}\label{badimplies}
S_{1313} + \lambda^2 S_{1414} + \mu^2 S_{2323} + \lambda^2 \mu^2 S_{2424} - 2\lambda\mu S_{1234}  < 0. 
\end{equation}
Rearranging terms in \eqref{goodeq} but with the inequality flipped, we have
\begin{align}\label{badeq}
\kappa \, \varepsilon \, (1- \mu^2)(1-\lambda^2) &< \varphi(x)^{-1}\Big(|S_{1313}| + \lambda^2 |S_{1414}| + \mu^2 |S_{2323}| + \lambda^2 \mu^2 |S_{2424}| + 2\lambda\mu |S_{1234}| \Big) \\
 \nonumber &\leq  C(n) \Theta \frac{\scal(x)}{\varphi(x)}.
\end{align}
As we have used several times now, the RHS is bounded on $M$. So it remains to show the coefficient of $\kappa$ on the LHS is bounded away from zero. By the $\PIC1$ assumption, for our given $\lambda$ we have 
\begin{equation*}
S_{1313} + \lambda^2S_{1414} + S_{2323} +  \lambda^2S_{2424} - 2\lambda S_{1234} \geq 2\varepsilon \, (1+\lambda^2)\, \scal 
\end{equation*}
Combining this with \eqref{badimplies} implies
\begin{equation*}
(1-\mu^2) S_{2323} + \lambda^2 (1- \mu^2) S_{2424} - 2\lambda(1 -\mu) S_{1234} \geq 2\varepsilon \,(1+\lambda^2)\, \scal 
\end{equation*}
Using $|S| \leq C(n)\, \Theta\, \scal$, this implies that at any point for which \eqref{badeq} holds, we have
\begin{equation*}
(1-\mu^2) \geq \frac{\varepsilon}{C(n) \Theta} > 0.
\end{equation*}
So $\mu$ is uniformly bounded away from $1$ by a constant depending only on $n$ and $\varepsilon$. An identical argument implies the same for $\lambda$ and this shows the coefficient of $\kappa$ is bounded away from zero in \eqref{badeq}. This completes the proof of the claim: For $\kappa$ sufficiently large depending upon of $\varepsilon$ and $n$, \eqref{badeq} cannot hold, and so \eqref{goodeq} does.

Let $\kappa_0$ be minimal such that \eqref{goodeq} holds on all of $M$. Let $x_0, \{e_1, e_2, e_3, e_4\}, \lambda, \mu$ be such that 
\begin{equation*}
S_{1313} + \lambda^2 S_{1414} + \mu^2 S_{2323} + \lambda^2 \mu^2 S_{2424} - 2\lambda\mu S_{1234} + \kappa_0\, \varepsilon\, \varphi(x_0) \, (1-\lambda^2)(1-\mu^2) = 0.
\end{equation*}
Note that we have $\lambda \neq 1$ and $\mu \neq 1$. By Proposition 7.27 in \cite{Bbook}, we have 
\begin{equation*}
Q(S)_{1313} + \lambda^2 Q(S)_{1414} + \mu^2 Q(S)_{2323} + \lambda^2 \mu^2 Q(S)_{2424} - 2\lambda\mu Q(S)_{1234} \geq 0 
\end{equation*}
Extend $\{e_1,e_2, e_3, e_4\}$ smoothly by parallel transport in a neighborhood of $x_0$ and define a smooth function $u$ on this neighborhood by 
\begin{equation*}
u(x) = S_{1313} + \lambda^2 S_{1414} + \mu^2 S_{2323} + \lambda^2 \mu^2 S_{2424} - 2\lambda\mu S_{1234} + \kappa_0\, \varepsilon\, \varphi(x) \, (1-\lambda^2)(1-\mu^2). 
\end{equation*}
Then we have $u(x_0) = 0$ and $\Delta_fu(x_0) \geq 0$. On the other hand, 
\begin{align*}
\Delta_f u(x_0) &= -Q(S)_{1313} - \lambda^2 Q(S)_{1414} - \mu^2 Q(S)_{2323} - \lambda^2 \mu^2 Q(S)_{2424} + 2\lambda\mu Q(S)_{1234} \\
& \qquad + \kappa_0\, \varepsilon\, \big(\Delta_f\varphi(x_0) - \varphi(x_0)\big) \, (1-\lambda^2)(1-\mu^2) + u(x_0)\\
& \leq  \kappa_0\, \varepsilon\, \big(\Delta_f\varphi(x_0) - \varphi(x_0)\big) \, (1-\lambda^2)(1-\mu^2)
\end{align*}
If $\kappa_0 \neq 0$, then we conclude $\Delta_f u(x_0) < 0$, a contradiction. Therefore, $\kappa_0 = 0$, and 
\begin{equation*}
S_{1313} + \lambda^2 S_{1414} + \mu^2 S_{2323} + \lambda^2 \mu^2 S_{2424} - 2\lambda\mu S_{1234}  \geq 0,
\end{equation*}
for all $x \in M$, $\{e_1, e_2, e_3, e_4 \} \subset T_xM$ orthonormal, and $\lambda, \mu \in [0,1]$. Finally, taking $\varepsilon \to 0$, we conclude that $R$ is weakly PIC2 as was claimed. 
\end{proof}

We can now prove Theorem 1.4. 
\begin{proof}
In \cite{BrePIC}, Brendle constructs two continuous families of cones, $C(b)_{b \in (0, b_{\max}]}$ and $\til C(b)_{b \in (0, \til b_{\max}]}$, both satisfying property $(\ast)$  for $n \geq 12$; see Brendle's Definition 3.1 and Definition 4.1. These families satisfy that $C(b_{\max}) = \til C(\til b_{\max})$ (Proposition 4.3) and $ \til C(b) \to C(b_{\max}) \cap \PIC1 \subset \PIC1$ continuously as $b \to 0$ (Definition 4.1). The family 
\[
\hat C(b) = \begin{cases} C(b) & b \in (0, b_{\max}] \\ \til C(b_{\max} + \til b_{\max} - b) & b \in (b_{\max}, b_{\max} + \til b_{\max}) \end{cases}
\]
therefore is a continuous family of cones with property $(\ast)$ which pinches towards $\PIC 1$. By Lemma \ref{ricci_pinched} above, we have $\Ric_{11} + \Ric_{22} \geq \theta \, \scal$. Uniform positive isotropic curvature and uniform two-positivity of the Ricci tensor imply the curvature tensor of $(M, g)$ is contained in $\hat C(b)$ some $b > 0$ small. This is readily seen after setting $T = \delta \,  \scal\,  I$ in Definition 3.1. Thus by Theorem \ref{cones}, the curvature tensor of $M$ is contained in $\hat C(b)$ for all $b$ in its definition and in particular $(M, g)$ is weakly PIC1. Now by Lemma \ref{furtherpinched}, $(M, g)$ is weakly PIC2. The theorem then follows from Theorem 1.1. 
\end{proof}

\bibliographystyle{amsalpha}
\bibliography{pic_ricci_solitons}

\sc{Department of Mathematics, Columbia University, New York, NY 10027} 

\end{document}